\numberwithin{equation}{section}
\newtheorem{theorem}{Theorem}[section]
\newtheorem{corollary}{Corollary}[section]
\newtheorem{definition}{Definition}[section]
\newtheorem{remark}{Remark}[section]
\numberwithin{equation}{section}
\newtheorem{proposition}{Proposition}[section]
\newenvironment{proof}[1][Proof]{\textbf{#1.} }{\
\qquad\qquad\rule{0.5em}{0.5em }}
\newcommand{\RNum}[1]{\uppercase\expandafter{\romannumeral #1\relax}}
\DeclareMathOperator*{\argmax}{arg\,max}
\newtheorem{lemma}{Lemma}[section]
\newcommand{\rom}[1]{\uppercase\expandafter{\romannumeral #1\relax}}
\title{On Success runs of a fixed length defined on a $q$-sequence of binary trials
}
\author{Jungtaek Oh, Dae-Gyu Jang}
\begin{document}

\maketitle
\begin{abstract}
We study the exact distributions of runs of a fixed length in variation which considers binary trials for which the probability of ones is geometrically varying. The random variable $E_{n,k}$ denote the number of success runs of a fixed length $k$, $1\leq k \leq n$.
 Theorem \ref{exactpmftype4q-bino} gives an closed expression for the probability mass function (PMF) of the Type$\rom{4}$ $q$-binomial distribution of order $k$.  Theorem \ref{recurpmftype4q-bino} and Corollary \ref{recurpmftype4q-bino2} gives an recursive expression for the probability mass function (PMF) of the Type$\rom{4}$ $q$-binomial distribution of order $k$. The probability generating function and moments of random variable $E_{n,k}$ are obtained as a  recursive expression. We address the parameter estimation in the distribution of $E_{n,k}$ by numerical techniques. In the present work, we consider a sequence of independent binary zero and one trials with not necessarily identical distribution with the probability of ones varying according to a geometric rule. Exact and recursive formulae for the distribution obtained by means of enumerative combinatorics.

\end{abstract}

\tableofcontents

\section{Introduction}
\citet{charalambides2010a} studied discrete $q$-distributions on Bernoulli trials with a geometrically varying success probability. Let us consider a sequence $X_{1}$,...,$X_{n}$ of zero(failure)-one(success) Bernoulli trials, such that the trials of the subsequence after the $(i-1)$st zero until the $i$th zero are independent with equal failure probability. The $i$'s geometric sequences of trials is the subsequence after the $(i-1)$'st zero and until
the $i$'th zero, for $i>0$ and the subsequence after the $(j-1)$'st zero and until
the $j$'th zero, for $j>0$ are independent for all $i\neq j$ (i.e. $i$'th and $j$'th geometric sequences are independent) with probability of zeros at the $i$th geometric sequence of trials
\noindent
\begin{equation}\label{failureprobofq}
\begin{split}
q_{i}=1-\theta q^{i-1},\quad i=1,2,..., \quad 0\leq\theta\leq1,\quad 0\leq q<1.
\end{split}
\end{equation}
We note that probability of failures in the independent geometric sequences of trials is geometrically increasing with rate $q$.
Let $S_{j}^{(0)}=\Sigma_{m=1}^{j}(1-X_{m})$ denote the number of zeros in the first $j$ trials. Because the probability of zero's at the $i$th geometric sequence of trials is in fact the conditional probability of occurrence of a zero at any trial $j$ given the occurrence of $i-1$ zeros in the previous trials. We can rewrite as follows.
\noindent
\begin{equation}\label{failureprobofq2}
\begin{split}
q_{j,i}=p\Big(X_{j}=0\ \bigm|\ S_{j-1}^{(0)}=i-1\Big)=1-\theta q^{i-1},\quad i=1,2,...,j, \quad j=1,2,....
\end{split}
\end{equation}
\noindent
We note that \eqref{failureprobofq} is exactly the conditional probability in \eqref{failureprobofq2}. To make more clear and transparent the preceding, we consider an example $n = 18$, the binary sequence 111011110111100110, each subsequence has own success and failure probabilities according to a geometric rule.

\begin{figure}[t]
\noindent
  \begin{center}
\includegraphics[scale=0.65]{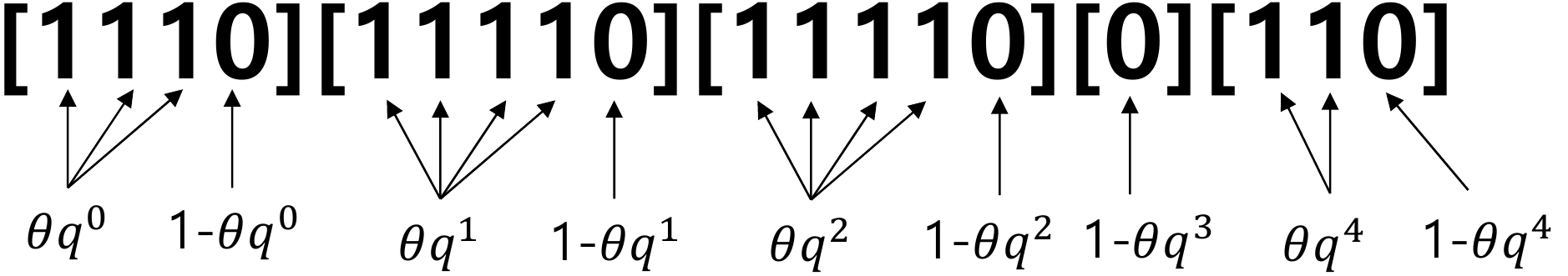}
\end{center}
\end{figure}

This stochastic model \eqref{failureprobofq} or \eqref{failureprobofq2} has interesting applications, studied as a reliability growth model by \cite{Dubman&Sherman1969}, and applies to a $q$-boson theory in physics by \cite{Jing&Fan1994} and \cite{Jing1994}. More specifically, $q$-binomial distribution introduced as a $q$-deformed binomial distribution, in order to set up a $q$-binomial state. This stochastic model \eqref{failureprobofq} also applies to start-up demonstration tests, as a sequential-intervention model which is proposed by \cite{BBV1995}.\\

The stochastic model \eqref{failureprobofq} is $q$-analogue of the classical binomial distribution with geometrically varying probability of zeros, which is a stochastic model of an independent and identically distributed (IID) trials with failure probability is
\noindent
\begin{equation}\label{bernoullifailureprob}
\pi_{j}=P\left(X_{j}=0\right)=1-\theta,\ j=1,2,\ldots,\ 0<\theta<1.
\end{equation}
\noindent
As $q$ tends toward $1$, the stochastic model \eqref{failureprobofq} reduces  to IID(Bernoulli) model \eqref{bernoullifailureprob}, since $q_{i}\rightarrow\pi_{i}$, $i=1,2,\ldots$ or $q_{j,i}\rightarrow1-\theta$, $i=1,2,\ldots,j$, $j=1,2,\ldots.$\\

The Discrete $q$-distributions based on the stochastic model of the sequence of independent Bernoulli trials have been investigated by numerous researchers, for a lucid review and comprehensive list of publications on this area the interested reader may consult the monographs by \cite{charalambides2010a,Charalambides2010b,Charalambides2016}.\\

From a Mathematical and Statistical point of view, \cite{Charalambides2016} mentioned the preface of his book \textit{"It should be noticed that a stochastic model of a sequence of independent Bernoulli trials, in which the probability of success at a trial is assumed to vary with the number of trials and/or the number of successes, is advantageous in the sense that it permits incorporating the experience gained from previous trials and/or successes. If the probability of success at a trial is a very general function of the number of trials and/or the number successes, very little can be inferred from it about the distributions of the various random variables that may be defined on this model. The assumption that the probability of success (or failure) at a trial varies geometrically, with rate (proportion) $q$, leads to the introduction of discrete $q$-distributions"}.\\

The distribution theory of runs and patterns has been incredibly developed in the last few decades through a slew of the research literature because of their theoretical interest and applications in a wide variety of research areas such as hypothesis testing, system reliability, quality control, physics, psychology, radar astronomy, molecular biology, computer science, insurance, and finance. During the past few decades up to recently, the meaningful progress on runs and pattern statistics has been wonderfully surveyed in \citet{balakrishnan2003runs} as well as in \citet{fu2003distribution} and references therein. Furthermore, there are some more recent contributions on the topic such as \citet{arapis2018}, \citet{eryilmaz2018}, \citet{kong2019}, \citet{makri2019}, and \citet{aki2019}.\\
There are several ways of counting scheme. Each counting scheme depends on different conditions: whether or not the overlapping counting is permitted, and whether or not the counting starts from scratch when a certain kind or size of run has been so far enumerated. Feller (1968)\cite{feller1968introduction} proposed a classical counting method, once $k$ consecutive successes show up, the number of occurrences of $k$ consecutive successes is counted and the counting procedure starts anew, called \textit{non-overlapping} counting scheme which is referred to as Type $\rom{1}$ distributions of order $k$. A second scheme can be initiated by counting a success runs of length greater than or equal to $k$ preceded and followed by a failure or by the beginning or by the end of the sequence (see. e.g. Mood, 1940 or Gibbons, 1971 or Goldstein, 1990) and is usually called \textit{at least} counting scheme which is referred to as Type $\rom{2}$ distributions of order $k$. Ling (1988) suggested the \textit{overlapping} counting scheme, an uninterrupted sequence of $m\geq k$ successes preceded and followed by a failure or by the beginning or by the end of the sequence. It accounts for $m-k+1$ success runs of length of $k$ which is referred to as Type $\rom{3}$ distributions of order $k$. Mood(1940) suggested \textit{exact} counting scheme, asuccess run of length exactly $k$ preceded and succeeded by failure or by nothing which is referred to as Type $\rom{4}$ distributions of order $k$.

According to the three aforementioned counting schemes,the random variables of the number of runs of length $k$ counted in $n$ outcomes, have three different distributions which are denoted as $N_{n,k}$, $G_{n,k}$, $M_{n,k}$ and $E_{n,k}$. Moreover, if the underline sequence is an independent and identically distributed (i.i.d.) sequence of random variables, $X_{1},X_{2},\ldots ,X_{n}$, then distributions of $N_{n,k}$, $G_{n,k}$, $M_{n,k}$ and $E_{n,k}$  will be referred to as Type $\rom{1}$, $\rom{2}$, $\rom{3}$ and $\rom{4}$  binomial distributions of order $k$.

To make more clear the distinction between the aforementioned counting methods we mention by way of example that for $n=12$, the binary sequence $011111000111$ contains $N_{12,2}=3$, $G_{12,2}=2$, $M_{12,2}=6$,
 $T_{2,2}^{(\rom{1})}=5$, $T_{2,2}^{(\rom{2})}=11$, and $T_{2,2}^{(\rom{3})}=4$.

\section{Preliminary and Notation}
We first recall some definitions, notation and known results in which will be used in this paper. Throughout the paper, we suppose that $0<q<1$. First, we introduce the following notation.
\begin{itemize}
\item $L_n^{(1)}:$ the length of the longest run of successes in $X_{1}, X_{2}, \ldots, X_{n}$;
\item $L_n^{(0)}:$ the length of the longest run of failures in $X_{1}, X_{2}, \ldots, X_{n}$;
\item $S_{n}:$ the total number of successes in $X_{1}, X_{2}, \ldots, X_{n}$;
\item $F_{n}:$ the total number of failures in $X_{1}, X_{2}, \ldots, X_{n}$.
\end{itemize}
\noindent
Next, let us introduce some basic $q$-sequences and functions and their properties, which are useful in the sequel. The $q$-shifted factorials are defined as
\begin{equation}\label{}
(a;q)_{0}=1,\quad (a;q)_{n} =\prod_{k=0}^{n-1}(1-aq^{k}),\quad (a;q)_{\infty} =\prod_{k=0}^{\infty}(1-aq^{k}).
\end{equation}
\noindent
Let $m$, $n$ and $i$ be positive integer and $z$ and $q$ be real numbers, with $q\neq 1$. The number $[z]_{q} = (1-q^{z})/(1-q)$ is called $q$-number and in particular $[z]_{q}$ is called $q$-integer. The $m$ th order factorial of the $q$-number $[z]_{q}$, which is defined by
\begin{equation}\label{}
\begin{split}
[z]_{m,q}&=\prod_{i=1}^{m}[z-i+1]_{q}= [z]_{q} [z-1]_{q}\cdots[z-m+1]_{q}\\
&=\frac{(1-q^{z})(1-q^{z-1})\cdots(1-q^{z-m+1})}{(1-q)^{m}},\ z=1,2,\ldots,\ m=0,1,\ldots,z.
\end{split}
\end{equation}
\noindent
is called $q$-factorial of $z$ of order $m$. In particular, $[m]_{q}! = [1]_{q}[2]_{q}...[m]_{q}$ is called $q$-factorial of $m$. The $q$-binomial coefficient (or Gaussian polynomial) is defined by
\noindent
\begin{equation}\label{}
\begin{split}
\begin{bmatrix}
n\\
m
\end{bmatrix}_{q}
&=\frac{[n]_{m,q}}{[m]_{q}!}=\frac{[n]_{q}!}{[m]_{q}![n-m]_{q}!}=\frac{(1-q^{n})(1-q^{n-1})\cdots(1-q^{n-m+1})}{(1-q^{m})(1-q^{m-1})\cdots(1-q)}\\
&=\frac{(q;q)_{n}}{(q;q)_{m}(q;q)_{n-m}},\ m=1,2,\ldots,
\end{split}
\end{equation}
\noindent
The $q$-binomial ($q$-Newton's binomial) formula is expressed as
\noindent
\begin{equation}\label{}
\prod_{i=1}^{n}(1+zq^{i-1})=\sum_{k=0}^{n}q^{k(k-1)/2}\begin{bmatrix}
n\\
k
\end{bmatrix}_{q}
z^{k},\ -\infty<z<\infty,\ n=1,2,\ldots.
\end{equation}
\noindent
For $q\rightarrow 1$ the $q$-analogs tend to their classical counterparts, that is
\noindent
\begin{equation*}\label{eq: 1.1}
\begin{split}
\lim\limits_{q\rightarrow 1}
\begin{bmatrix}
n\\
r
\end{bmatrix}_{q}
={n \choose r}
\end{split}
\end{equation*}
\noindent
Let us consider again a sequence of independent geometric sequences of trials with probability of failure at the $i$th geometric sequence of trials given by \eqref{failureprobofq} or \eqref{failureprobofq2}. We are interesting now is focused on the study of the number of successes in a given number of trials in this stochastic model.
\begin{definition}
Let $Z_{n}$ be the number of successes in a sequence of $n$ independent Bernoulli trials, with probability of success at the $i$th geometric sequence of trials given by \eqref{failureprobofq} or \eqref{failureprobofq2}. The distribution of the random variable $Z_{n}$ is called $q$-binomial distribution, with parameters $n$, $\theta$, and $q$.
\end{definition}
\noindent
Let we introduce a $q$-analogue of the binomial distribution with the probability function of the number $Z_{n}$ of successes in $n$ trials $X_{1}, \ldots, X_{n}$ is given by
\begin{equation}\label{q-binomialdist}
\begin{split}
P_{q,\theta}\{Z_{n}=r\}=
\begin{bmatrix}
n\\
r
\end{bmatrix}_{q}
\theta^{r}\prod_{i=1}^{n-r}(1-\theta q^{i-1}),
\end{split}
\end{equation}\\
for $r = 0, 1, . . . , n$, $0 < q < 1$. The distribution  is called a $q$-binomial distribution. For $q \rightarrow 1$, because
\begin{equation*}\label{eq: 1.1}
\begin{split}
\lim\limits_{q\rightarrow 1}
\begin{bmatrix}
n\\
r
\end{bmatrix}_{q}
={n \choose r}
\end{split}
\end{equation*}
so that the q-binomial distribution converges to the usual binomial distribution as $q \rightarrow 1$, as follows
\begin{equation}
P_{\theta}\left(Z_{n}=r\right)={n\choose r}\theta^{r}(1-\theta)^{n-r},\ r=0,1,\ldots,n,
\end{equation}
with parameters $n$ and $\theta$.
The $q$-binomial distribution studied by \citet{charalambides2010a,Charalambides2016}, which is connected with $q$-Berstein polynomial. \citet{Jing1994} introduced probability function \eqref{q-binomialdist} as a $q$-deformed binomial distribution, also derived recurrence relation of its probability distribution.
\noindent
In the sequel, $P_{q ,\theta}(.)$ and $P_{\theta}(.)$ denote probabilities related with the stochastic model \eqref{failureprobofq} and \eqref{bernoullifailureprob}, respectively.

\section{Type $\rom{4}$ $q$-binomial distribution of order $k$} 
\vspace{0.13in}
We now make some useful Definition and Lemma for the proofs of Theorem in the sequel.
\begin{definition}
 For $0<q\leq1$, define the polynomial \begin{equation*}\label{b_qequation} \begin{split}
 B_{q}(r,s,t,k)=&\sum_{y_{1},y_{2},\ldots,y_{r}}   q^{y_{2}+2y_{3}+\cdots+(r-1)y_{r}}
\end{split}. \end{equation*} where the summation is taken place over all  $y_{1},y_{2},\ldots,y_{r}$ satisfying the conditions \begin{equation}
\begin{split} y_{i}\geq 0,\ i=1,2,\ldots,r,\ \sum_{i=1}^{r}y_{i}=s,\ \delta_{k,y_{1}}+\cdots+\delta_{k,y_{r}}=t,\ \delta_{i,j}=\left\{
               \begin{array}{ll}
                 1, & \text{if}\ i=j \\
                 0, & \text{if}\ i\neq j.
               \end{array}
             \right.
\end{split} \end{equation}
\end{definition}
\noindent
The following gives a recurrence relation useful for the computation of $B_{q}^{k}(r,s,t)$.\\
\noindent
\begin{lemma}
For $0<q\leq1$, $B_{q}(r,s,t,k)$ obeys the following recurrence relation, \begin{equation*}\label{eq: 1.1} \begin{split} B_{q}(r,s,t,k)= \left\{
  \begin{array}{ll}
    1, & \text{for}\ r=1,\ s=k,\ t=1 \\
    & \text{or}\ r=1,\ 0\leq s< k,\ t=0\\
        & \text{or}\ r=1,\ s> k,\ t=0\\
\sum_{j=0}^{k-1}q^{j(r-1)}B_{q}(r-1,s-j,t,k)\\ +q^{k(r-1)}B_{q}(r-1,s-k,t-1,k)\\ +\sum_{j=k+1}^{s}q^{j(r-1)}B_{q}(r-1,s-j,t,k), & \text{for}\
r\geq 2,\ s\geq tk,\ t\leq r \\
    0, & \text{otherwise.}\\
  \end{array}
\right.
\end{split}
\end{equation*}
\end{lemma}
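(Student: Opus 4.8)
The plan is to establish the recurrence by conditioning on the value of the last summation index $y_r$ in the defining sum, which is the standard way of peeling one variable off a constrained composition count. The base cases and the ``otherwise'' branch are verified by direct inspection of the constraints, and the nontrivial middle branch then follows by splitting the admissible range of $y_r$ at the critical value $k$.

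First I would settle the base case $r=1$. Here the only summation variable is $y_1$, and the exponent $y_2+2y_3+\cdots+(r-1)y_r$ is empty, so every surviving term equals $q^{0}=1$. The constraints reduce to $y_1=s$ together with $\delta_{k,y_1}=t$. Thus if $s=k$ the unique admissible configuration has $\delta_{k,y_1}=1$, forcing $t=1$ and giving $B_q(1,k,1,k)=1$; if $s\neq k$ (that is, $0\le s<k$ or $s>k$) the unique admissible configuration has $\delta_{k,y_1}=0$, forcing $t=0$ and giving $B_q(1,s,0,k)=1$. Every remaining combination of $(s,t)$ with $r=1$ admits no configuration, so $B_q=0$, matching the ``otherwise'' line. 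I would also record the two feasibility constraints for general $r$: since the $t$ parts equal to $k$ alone contribute $tk$ to the total, any admissible configuration needs $s\ge tk$, and since there are only $r$ parts one needs $t\le r$; whenever either fails the index set is empty and $B_q(r,s,t,k)=0$.

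For the recursive step with $r\ge 2$, I would fix $y_r=j$ and sum over $0\le j\le s$. The exponent factorises as
\begin{equation*}
y_2+2y_3+\cdots+(r-1)y_r=\bigl(y_2+2y_3+\cdots+(r-2)y_{r-1}\bigr)+(r-1)j,
\end{equation*}
so the factor $q^{(r-1)j}$ pulls out while the parenthesised part is precisely the exponent appearing in $B_q(r-1,\cdot,\cdot,k)$. After fixing $y_r=j$, the remaining variables $y_1,\ldots,y_{r-1}$ must satisfy $\sum_{i=1}^{r-1}y_i=s-j$ and carry exactly $t-\delta_{k,j}$ parts equal to $k$. Hence the inner sum over $y_1,\ldots,y_{r-1}$ is exactly $B_q(r-1,s-j,t-\delta_{k,j},k)$, and summing over $j$ gives
\begin{equation*}
B_q(r,s,t,k)=\sum_{j=0}^{s}q^{(r-1)j}B_q\bigl(r-1,s-j,t-\delta_{k,j},k\bigr).
\end{equation*}

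The key step, and the one requiring the most care, is resolving the Kronecker delta inside the summand. For every $j\neq k$ one has $\delta_{k,j}=0$, so the remaining parts must still contain $t$ copies of $k$; for the single index $j=k$ one has $\delta_{k,k}=1$, so the remaining parts need only $t-1$ copies of $k$. Splitting the range $\{0,1,\ldots,s\}$ into $\{0,\ldots,k-1\}$, the singleton $\{k\}$, and $\{k+1,\ldots,s\}$ therefore yields exactly
\begin{equation*}
\sum_{j=0}^{k-1}q^{j(r-1)}B_q(r-1,s-j,t,k)+q^{k(r-1)}B_q(r-1,s-k,t-1,k)+\sum_{j=k+1}^{s}q^{j(r-1)}B_q(r-1,s-j,t,k),
\end{equation*}
which is the claimed recurrence. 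I expect the main obstacle to be purely bookkeeping: keeping the exponent shift $(r-1)j$ consistent with the reindexed polynomial $B_q(r-1,\cdot)$, decrementing $t$ by one exactly when $y_r=k$, and confirming that the degenerate ranges (for instance $s<k$, where the third sum is empty and the middle term has negative second argument, or boundary values of $t$) are handled automatically by the convention that $B_q$ vanishes outside its feasibility region $s\ge tk$, $t\le r$.
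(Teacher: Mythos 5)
Your proposal is correct and takes essentially the same route as the paper's own proof: both peel off the last variable $y_r$, pull out the factor $q^{(r-1)j}$, identify the inner sum as $B_q(r-1,s-j,t-\delta_{k,j},k)$, and split the range of $j$ at the critical value $k$ to resolve the Kronecker delta. Your explicit verification of the $r=1$ base cases and the feasibility conditions $s\ge tk$, $t\le r$ is slightly more thorough than the paper, which dismisses those cases as obvious, but the core argument is identical.
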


\begin{proof}
For $r\geq 2$, $s\geq tk$, $t\leq r$, we observe that since $y_{r}$ can take the values $0,1,\ldots,s$, then $B_{q}(r,s,t,k)$ can be written as
\begin{equation}
\begin{split}
 B_{q}(r,s,t,k)=&{\mathop{\sum...\sum}_{\substack{{y_{1}}+\cdots+{y_{r-1}}=s\\
\delta_{k,y_{1}}+\cdots+\delta_{k,y_{r-1}}=t}}}   q^{y_{2}+2y_{3}+\cdots+(r-2)y_{r-1}}\\
&+q^{r-1}{\mathop{\sum...\sum}_{\substack{{y_{1}}+\cdots+{y_{r-1}}=s-1\\ \delta_{k,y_{1}}+\cdots+\delta_{k,y_{r-1}}=t}}}
q^{y_{2}+2y_{3}+\cdots+(r-2)y_{r-1}}\\ &+\cdots+q^{k(r-1)}{\mathop{\sum...\sum}_{\substack{{y_{1}}+\cdots+{y_{r-1}}=s-k\\
\delta_{k,y_{1}}+\cdots+\delta_{k,y_{r-1}}=t-1}}}   q^{y_{2}+2y_{3}+\cdots+(r-2)y_{r-1}}\\
&+q^{(k+1)(r-1)}{\mathop{\sum...\sum}_{\substack{{y_{1}}+\cdots+{y_{r-1}}=s-k-1\\ \delta_{k,y_{1}}+\cdots+\delta_{k,y_{r-1}}=t}}}
q^{y_{2}+2y_{3}+\cdots+(r-2)y_{r-1}}\\ &+\cdots+q^{s(r-1)}{\mathop{\sum...\sum}_{\substack{y_{1}+\cdots+y_{r-1}=0 \\
\delta_{k,y_{1}}+\cdots+\delta_{k,y_{r-1}}=t}}}   q^{y_{2}+2y_{3}+\cdots+(r-2)y_{r-1}}\\
\end{split}
\end{equation}
\noindent
Using simple algebraic arguments to simplify,
\noindent
\begin{equation*}
\begin{split}
 B_{q}(r,s,t,k)=&\sum_{y_{r}=0}^{k-1}q^{y_{r}(r-1)}{\mathop{\sum...\sum}_{\substack{{y_{1}}+{y_{2}}+\cdots+{y_{r-1}}=s-y_{r}\\
\delta_{k,y_{1}}+\delta_{k,y_{2}}+\cdots+\delta_{k,y_{r-1}}=t}}}   q^{y_{2}+2y_{3}+\cdots+(r-2)y_{r-1}}\\
&+q^{k(r-1)}{\mathop{\sum...\sum}_{\substack{{y_{1}}+{y_{2}}+\cdots+{y_{r-1}}=s-k\\
\delta_{k,y_{1}}+\delta_{k,y_{2}}+\cdots+\delta_{k,y_{r-1}}=t-1}}}   q^{y_{2}+2y_{3}+\cdots+(r-2)y_{r-1}}\\
&+\sum_{y_{r}=k+1}^{s}q^{y_{r}(r-1)}{\mathop{\sum...\sum}_{\substack{{y_{1}}+{y_{2}}+\cdots+{y_{r-1}}=s-y_{r}\\
\delta_{k,y_{1}}+\delta_{k,y_{2}}+\cdots+\delta_{k,y_{r-1}}=t}}}   q^{y_{2}+2y_{3}+\cdots+(r-2)y_{r-1}}\\
=&\sum_{j=0}^{k-1}q^{j(r-1)}B_{q}(r-1,s-j,t,k)+q^{k(r-1)}B_{q}(r-1,s-k,t-1,k)\\ &+\sum_{j=k+1}^{s}q^{j(r-1)}B_{q}(r-1,s-j,t,k)
\end{split}
\end{equation*}
\noindent
The other cases are obvious. \end{proof}
\noindent
\begin{remark}
{\rm
We notice that for $q=1$, the quantity $B_{1}(r,s,t,k)$ represents the number of integer solutions $(y_{1},y_{2},\ldots,y_{r})$ satisfying the system of Eqs \eqref{b_qequation}.  or, altenatively, it is the number of allocations of $s$ balls into $r$ cells so that each of exactly $t$
of them receives exaactly equal to $k$ balls. This number is given by \begin{equation*}\label{eq: 1.1} \begin{split} B_{1}(r,s,t,k)={r \choose
t}A(s-tk,\ r-t,\ k) \end{split}, \end{equation*} where (see Makri et a;. 2007) $A(\alpha,r,k)=\sum_{j=0}^{[\alpha/k]}(-1)^{j}{r \choose
j}{\alpha-(k+1)j+r-1 \choose \alpha-jk}$.
}
\end{remark}

\begin{theorem}
\label{exactpmftype4q-bino}
For $0<q\leq1$, the probability mass function of success runs with length exactly equal to $k$ in $n$ trials is given by
\begin{equation*}\label{eq: 1.1} \begin{split} P(E_{n,k}=x)=\sum_{i=0}^{n-xk}\theta^{n-i}{\prod_{{j}=1}^i}(1-\theta q^{j-1})
B_{q}(i+1,n-i,x,k). \end{split} \end{equation*} \end{theorem}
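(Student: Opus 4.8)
The plan is to condition on the number of failures (zeros) appearing among the $n$ trials and to exploit the fact that these zeros split the sequence into maximal success blocks whose lengths encode exactly the Type $\rom{4}$ count. First I would fix $i$ to be the number of zeros in $X_{1},\ldots,X_{n}$. Writing $y_{1}$ for the number of successes before the first zero, $y_{m}$ for the number of successes strictly between the $(m-1)$st and $m$th zero ($2\le m\le i$), and $y_{i+1}$ for the number of trailing successes after the $i$th zero, one obtains $r=i+1$ nonnegative block lengths with $\sum_{m=1}^{i+1}y_{m}=n-i$. Each such block vector corresponds to a unique binary sequence, and since every maximal success run is bounded by a zero or by an end of the sequence, these blocks are precisely the Type $\rom{4}$ runs; hence the event $\{E_{n,k}=x\}$ restricted to exactly $i$ zeros is the event $\delta_{k,y_{1}}+\cdots+\delta_{k,y_{i+1}}=x$.

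The crucial step is evaluating the probability of one such arrangement under the stochastic model \eqref{failureprobofq2}. Here I would track the geometric-sequence index, which advances by one at each zero: a success occurring after $m-1$ zeros carries probability $\theta q^{m-1}$, while the zero that terminates the $m$th geometric sequence carries probability $1-\theta q^{m-1}$. Using the independence within and across the geometric sequences and multiplying these factors over the $i+1$ blocks (noting that the trailing block $y_{i+1}$ contributes only success factors, since it is not closed off by a zero within the first $n$ trials) gives
\begin{equation*}
\prod_{m=1}^{i+1}\bigl(\theta q^{m-1}\bigr)^{y_{m}}\prod_{m=1}^{i}\bigl(1-\theta q^{m-1}\bigr)
=\theta^{\,n-i}\prod_{j=1}^{i}(1-\theta q^{j-1})\,q^{\,y_{2}+2y_{3}+\cdots+i\,y_{i+1}},
\end{equation*}
where I have used $\sum_{m}y_{m}=n-i$ to collect the power of $\theta$ and $\sum_{m}(m-1)y_{m}=y_{2}+2y_{3}+\cdots+i\,y_{i+1}$ to collect the power of $q$. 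The point to get right is that both $\theta^{\,n-i}$ and $\prod_{j=1}^{i}(1-\theta q^{j-1})$ depend only on $i$, not on how the successes are distributed among the blocks, so they factor out of the sum over block patterns.

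Finally I would sum the displayed weight over all block vectors $(y_{1},\ldots,y_{i+1})$ with $\sum_{m}y_{m}=n-i$ and exactly $x$ of them equal to $k$. After pulling out the $i$-dependent factor, the remaining sum $\sum q^{\,y_{2}+2y_{3}+\cdots+i\,y_{i+1}}$ is, by its defining summation with $r=i+1$, $s=n-i$, $t=x$, exactly $B_{q}(i+1,n-i,x,k)$. Summing over the admissible $i$ yields the claimed formula; the upper limit $i\le n-xk$ follows because $x$ runs of length $k$ require at least $xk$ successes, so $n-i\ge xk$, while the constraint $x\le i+1$ need not be imposed separately since $B_{q}(i+1,n-i,x,k)=0$ whenever $x>i+1$. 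The main obstacle is the bookkeeping in the probability computation—correctly identifying which geometric sequence each trial belongs to and treating the unterminated trailing block—rather than any deep combinatorial difficulty, the latter having been isolated into the polynomial $B_{q}$ and its recurrence.
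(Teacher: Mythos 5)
Your proposal is correct and follows essentially the same route as the paper's own proof: conditioning on the number of failures $i$, decomposing the sequence into the $i+1$ success blocks $y_{1},\ldots,y_{i+1}$, computing the probability of a single arrangement as $\theta^{n-i}\prod_{j=1}^{i}(1-\theta q^{j-1})\,q^{y_{2}+2y_{3}+\cdots+iy_{i+1}}$, identifying the resulting sum over block vectors with $B_{q}(i+1,n-i,x,k)$, and summing over admissible $i$. If anything, your treatment is slightly more careful than the paper's on the boundary bookkeeping (justifying the upper limit $i\le n-xk$ and noting that $x\le i+1$ is absorbed by the vanishing of $B_{q}$ when $t>r$).
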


\begin{proof} Let $S_{n}$ denote the total number of failures in the first $n$ trials. For $i=1,\ldots,n-xk$, a typical element of the event $\{E_{n,k}=x,\ S_{n}=i\}$
is an ordered sequence which consists of $n-i$ successes and $i$ failures such that the length of success run is non-negative integer and exactly $x$ of them are length is equal to $k$. The number of these sequences can be derived as follows. First we will distribute the $i$ failures. Since $i$ failures form $i+1$ cells. Next, we will distribute the $n-i$ successes into the $i+1$ distinguishable cells as follows.
\begin{equation*}\label{eq: 1.1} \begin{split}
{\underbrace{1\ldots1}_{y_{1}}}0{\underbrace{1\ldots1}_{y_{2}}}0\ldots0{\underbrace{1\ldots1}_{y_{i+1}}} \end{split} \end{equation*} with $i$ 0s
and $n-i$ 1s, where the length of the first 1-run is $y_{1}$, the length of the second 1-run is $y_{2}$,..., the length of the $(i+1)$-th 1-run is
$y_{i+1}$. Then the probability of the above sequence is given by \noindent \begin{equation*}\label{eq: 1.1} \begin{split} (\theta
q^{0})^{y_{1}}&(1-\theta q^{0})(\theta q^{1})^{y_{2}}(1-\theta q^{1})\cdots(\theta q^{i-1})^{y_{i}}(1-\theta q^{i-1})(\theta q^{i})^{y_{i+1}}.
\end{split} \end{equation*}
\noindent
and then using simple exponentiation algebra arguments to simplify, \noindent \begin{equation*}\label{eq: 1.1}
\begin{split} \theta^{n-i}{\prod_{{j}=1}^i}(1-\theta q^{j-1})q^{y_{2}+2y_{3}+\cdots+iy_{i+1}}. \end{split} \end{equation*}
\noindent
But $y_{j}$s are nonnegative integers such that ${y_{1}}+{y_{2}}+\cdots+{y_{i+1}}=n-i$ and exactly $x$ of the $y_{j}$s lengths are equal to $k$ so that
\noindent
\begin{equation*}\label{eq: 1.1} \begin{split} P&(E_{n,k}=x,\ S_{n}=i)\\ =&\theta^{n-i}{\prod_{{j}=1}^i}(1-\theta q^{j-1})
{\mathop{\sum...\sum}_{\substack{{y_{1}}+{y_{2}}+\cdots+{y_{i+1}}=n-i\\ \delta_{k,y_{1}}+\delta_{k,y_{2}}+\cdots+\delta_{k,y_{i+1}}=x}}}
q^{y_{2}+2y_{3}+\cdots+iy_{i+1}}. \end{split} \end{equation*}
Using lemma, we can rewrite as follows
\begin{equation*}
\label{eq: 1.1}
\begin{split}
P&(E_{n,k}=x,\ S_{n}=i)\\ =&\theta^{n-i}{\prod_{{j}=1}^i}(1-\theta q^{j-1})
B_{q}(i+1,n-i,x,k).
\end{split}
\end{equation*}
\noindent
Summing with respect to $i=1,\ldots,n-xk$ the result follows.
\end{proof}

\begin{remark}
{\rm
We notice that for $q=1$, the PMF $P_{q,\theta}(E_{n,k}=x)$ converges to the PMF $P_{\theta}(E_{n,k}=x)$ as follow
\begin{equation*}\label{eq: 1.1} \begin{split} P_{\theta}(E_{n,k}=x)=&\sum_{i=0}^{n-xk}\theta^{n-i}(1-\theta )^{i}  B_{1}(i+1,n-i,x,k)\\
=&\sum_{i=0}^{n-xk}\theta^{n-i}(1-\theta )^{i}  {i+1 \choose x}A(n-i-xk,i+1-x,k),\\
&\quad \quad\quad\quad\quad\quad\quad\quad\quad\quad\quad\quad\quad \text{for}\ x=1,\ldots,\left[\frac{n+1}{k+1}\right], \end{split}
\end{equation*}
 where (see Makri et a;. 2007) $A(\alpha,r,k)=\sum_{j=0}^{[\alpha/k]}(-1)^{j}{r \choose
j}{\alpha-(k+1)j+r-1 \choose \alpha-jk}$.
}
\end{remark}

We will observe how the probability of failure varies according on the model , after the first occurrence of the failure. We obtain the recursive schemes for the PMF $f_{q}(x;n,k;\theta)$ without using the $B_{q}(r,s,t,k)$.

\begin{theorem}
\label{recurpmftype4q-bino}
The PMF $f_{q}(x;n,k;\theta)=P_{q,\theta}(E_{n,k}=x)$, $x\in \mathcal{R}(E_{n,k}=x)$, $0<q\leq 1,$ satisfies for $n\geq k$ the recursive relation
\begin{equation}
\label{recursive1}
\begin{split}
f_{q}(x;n,k;\theta)=&\sum_{\substack{i=1\\i\neq k+1}}^{n}\theta^{i-1}(1-\theta)f_{q}(x;n-i,k;\theta q)\\
&+\theta^{k}(1-\theta)f_{q}(x-1;n-k-1,k;\theta q)\\
&+\theta^{k}\delta_{x,1}\delta_{n,k}+\theta^{n}\delta_{x,0}I(n>k),
\end{split}
\end{equation}
with $f_{q}(x;n,k;\theta)=0$ if $x<0$ or $x>\lfloor\frac{n+1}{k+1}\rfloor$ and $f_{q}(x;n,k;\theta)=\delta_{x,0}$ if $0\leq n <k$.
\end{theorem}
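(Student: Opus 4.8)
The plan is to prove the recursion by a first-step (renewal-type) decomposition in which I condition on the location of the first failure in the sequence $X_1,\dots,X_n$. The engine of the argument is the observation that the stochastic model \eqref{failureprobofq} possesses a shift property: since the failure probability in the $i$th geometric block is $1-\theta q^{i-1}$, once the first zero has occurred every subsequent block is reindexed by one, so the failure probabilities of the remaining blocks become $1-\theta q,\,1-\theta q^2,\ldots$. Relabelling, the trials strictly after the first failure constitute a fresh $q$-sequence of the same type but with the parameter $\theta$ replaced by $\theta q$. This is exactly why $\theta q$ appears in the argument of $f_q$ on the right-hand side of \eqref{recursive1}.

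First I would split the sample space according to whether a failure occurs at all. If the first failure appears at position $i$ (so $X_1=\cdots=X_{i-1}=1$ and $X_i=0$), then, because the whole leading block lies in the first geometric sequence, its probability is $\theta^{i-1}(1-\theta)$. The leading block is a success run of length exactly $i-1$, isolated from the rest by the zero at position $i$, so it is counted by $E_{n,k}$ if and only if $i-1=k$, i.e. $i=k+1$. Applying the shift property to $X_{i+1},\ldots,X_n$, the number of length-$k$ runs among them is distributed as $E_{n-i,k}$ under parameter $\theta q$, and it is independent of the leading block. Hence
$$P_{q,\theta}(E_{n,k}=x)=\sum_{i=1}^{n}\theta^{i-1}(1-\theta)\,P_{q,\theta q}\big(E_{n-i,k}=x-\delta_{i,k+1}\big)+(\text{no-failure term}).$$
Separating the single index $i=k+1$ (for which the indicator shifts the count by one) from the remaining indices reproduces the first two sums in \eqref{recursive1}, namely $\sum_{i\neq k+1}\theta^{i-1}(1-\theta)f_q(x;n-i,k;\theta q)$ and $\theta^{k}(1-\theta)f_q(x-1;n-k-1,k;\theta q)$.

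The remaining no-failure term accounts for the event that all $n$ trials are successes, which occurs with probability $\theta^n$ and produces a single run of length exactly $n$. This run is counted by $E_{n,k}$ precisely when $n=k$, contributing $\theta^k\delta_{x,1}$ when $n=k$ and $\theta^n\delta_{x,0}$ when $n>k$; these are the two remaining terms in \eqref{recursive1}. Finally the stated boundary conventions are immediate: no run of length $k$ can fit into fewer than $k$ trials, so $E_{n,k}\equiv 0$ and $f_q(x;n,k;\theta)=\delta_{x,0}$ when $0\le n<k$, while the support bound $x\le\lfloor(n+1)/(k+1)\rfloor$ forces $f_q=0$ outside the stated range.

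I expect the main obstacle to be making the shift property fully rigorous, that is, verifying that conditioning on the first failure at position $i$ genuinely yields an identically structured $q$-sequence with parameter $\theta q$ on the tail, rather than merely a heuristic restart. This hinges on the conditional-probability reformulation \eqref{failureprobofq2}: given that the first zero sits at position $i$, the zero-count resets so that the conditional failure probability of a later trial having seen $\ell-1$ further zeros is $1-\theta q^{(\ell-1)+1}=1-(\theta q)q^{\ell-1}$, which is exactly the law \eqref{failureprobofq} with $\theta$ replaced by $\theta q$; independence of distinct geometric sequences is what guarantees the tail is undisturbed by fixing the leading block. Care is also needed at the extreme indices $i=1$ (empty leading run) and $i=n$ (empty tail, where the convention $f_q(x;0,k;\theta)=\delta_{x,0}$ must be invoked) so that the endpoints of the sum are treated consistently.
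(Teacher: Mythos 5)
Your proposal is correct and follows essentially the same route as the paper: the paper's proof conditions on a geometric random variable $Y$ (the waiting time until the first failure) via the law of total probability, which is exactly your first-failure decomposition, with the same split of the index $i=k+1$, the same no-failure term $\theta^{k}\delta_{x,1}\delta_{n,k}+\theta^{n}\delta_{x,0}I(n>k)$, and the same use of the shift property sending $\theta\mapsto\theta q$ after the first zero. If anything, your justification of that shift property through the conditional-probability formulation \eqref{failureprobofq2} is spelled out more carefully than in the paper's own (quite terse) proof.
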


\begin{proof}
Obviously for $x<0$ or $x >\lfloor\frac{n+1}{k+1}\rfloor$ and for $0 \leq n < k$ the theorem holds. Let $Y$ be a geometric RV with parameter $1-\theta$, PMF $h_{Y}(i)$ and CDF $H_{Y}(i)$, $i = 1,2,\ldots.$ For $n\geq k$ we observe that
\begin{equation*}
\begin{split}
f_{q}(x;n,k;\theta)=&\sum_{i=1}^{n}P_{q,\theta}(E_{n,k}=x|Y=i)h_{Y}(i)\\
&+P_{q,\theta}(E_{n,k}=x|Y>n)(1-H_{Y}(n))\\
=&\sum_{i=1}^{k}\theta^{i-1}(1-\theta)f_{q}(x;n-i,k;\theta q)\\
&+\theta^{k}(1-\theta)f_{q}(x-1;n-k-1,k;\theta q)\\
&+\sum_{i=k+2}^{n}\theta^{i-1}(1-\theta)f_{q}(x;n-i,k;\theta q)\\
&+\theta^{k}\delta_{x,1}\delta_{n,k}+\theta^{n}\delta_{x,0}I(n>k)
\end{split}
\end{equation*}

\end{proof}

The alternative recursive scheme for $f_{q}(x;n,k;\theta)$, applying mathematical algebra, is obtained by following corollary.

\begin{corollary}
\label{recurpmftype4q-bino2}
The PMF $f_{q}(x;n,k;\theta)=P_{q,\theta}(E_{n,k}=x)$, $x\in \mathcal{R}(E_{n,k}=x)$, $0<q\leq 1,$ satisfies for $n> k+1$ the recursive relation
\begin{equation}
\label{recyrsivepmf2}
\begin{split}
f_{q}(x;n&,k;\theta)=\\
&\theta f_{q}(x;n-1,k;\theta)+(1-\theta)\Big[f_{q}(x;n-1,k;\theta q)+\\
&\theta^{k}\left\{f_{q}(x-1;n-k-1,k;\theta q)-f_{q}(x;n-k-1,k;\theta q)\right\}+\\
&\theta^{k+1}\left\{f_{q}(x;n-k-2,k;\theta q)-f_{q}(x-1;n-k-2,k;\theta q)\right\}\Big].\\
\end{split}
\end{equation}
with $f_{q}(x;n,k;\theta)=0$ if $x<0$ or $x>\lfloor\frac{n+1}{k+1}\rfloor$, $f_{q}(x;n,k;\theta)=\delta_{x,0}$ if $0\leq n <k$, $f_{q}(0;k,k;\theta)=1-\theta^{k},$ $f_{q}(1;k,k;\theta)=\theta^{k},$ $f_{q}(0;k+1,k;\theta)=1-\theta^{k}(1-\theta)(1+q^k)$ and $f_{q}(1;k+1,k;\theta)=\theta^{k}(1-\theta)(1+q^k).$

\end{corollary}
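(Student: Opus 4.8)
The plan is to obtain \eqref{recyrsivepmf2} directly from the first recursion \eqref{recursive1} of Theorem \ref{recurpmftype4q-bino} by a difference argument: write \eqref{recursive1} once at $n$ and once at $n-1$, scale the $(n-1)$ instance by $\theta$, and subtract. Since the corollary assumes $n>k+1$, both $n>k$ and $n-1>k$ hold, so the Kronecker term $\theta^{k}\delta_{x,1}\delta_{n,k}$ is inactive in either instance and only $\theta^{n}\delta_{x,0}$ and $\theta^{n-1}\delta_{x,0}$ survive among the boundary contributions; after multiplying the $(n-1)$ relation by $\theta$ these two cancel against each other. This is precisely why the range $n>k+1$ (rather than merely $n\ge k$) is the natural hypothesis.

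First I would split the sum in \eqref{recursive1} as $\sum_{i=1}^{k}+\sum_{i=k+2}^{n}$, so that $f_q(x;n,k;\theta)$ carries $x$-terms at every lag $i\in\{1,\dots,k\}\cup\{k+2,\dots,n\}$ (that is, all $i\neq k+1$), together with the single $(x-1)$-term $\theta^{k}(1-\theta)f_q(x-1;n-k-1,k;\theta q)$ coming from the excluded lag $k+1$. Next I would write the same decomposition for $n-1$, multiply through by $\theta$, and re-index the two sums by $j=i+1$; this converts $\theta^{i}f_q(x;n-1-i,k;\theta q)$ into $\theta^{j-1}f_q(x;n-j,k;\theta q)$, so that after the shift the $(n-1)$ instance carries $x$-terms at every lag $j\in\{2,\dots,k+1\}\cup\{k+3,\dots,n\}$ (that is, all $j\neq k+2$), plus the $(x-1)$-term $\theta^{k+1}(1-\theta)f_q(x-1;n-k-2,k;\theta q)$.

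The core of the argument is then the cancellation in $f_q(x;n,k;\theta)-\theta f_q(x;n-1,k;\theta)$. With both families of $x$-terms aligned to the common summand $(1-\theta)\theta^{i-1}f_q(x;n-i,k;\theta q)$, the lags $i\in\{2,\dots,k\}$ and $i\in\{k+3,\dots,n\}$ occur in both instances and cancel, leaving only $i=1$ (present only at $n$, coefficient $+1$), $i=k+1$ (present only in the shifted $(n-1)$ instance, coefficient $-1$), and $i=k+2$ (present only at $n$, coefficient $+1$); together with the difference of the two $(x-1)$-terms this yields exactly
\begin{equation*}
\begin{split}
f_q(x;n,k;\theta)-\theta f_q(x;n-1,k;\theta)=(1-\theta)\Big[&f_q(x;n-1,k;\theta q)\\
&+\theta^{k}\{f_q(x-1;n-k-1,k;\theta q)-f_q(x;n-k-1,k;\theta q)\}\\
&+\theta^{k+1}\{f_q(x;n-k-2,k;\theta q)-f_q(x-1;n-k-2,k;\theta q)\}\Big],
\end{split}
\end{equation*}
which rearranges to \eqref{recyrsivepmf2}. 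The main obstacle is purely the bookkeeping: one must check that the overlapping windows $\{1,\dots,k\}\cup\{k+2,\dots,n\}$ and (after shifting) $\{2,\dots,k+1\}\cup\{k+3,\dots,n\}$ telescope to leave exactly those three surviving lags and nothing else, in particular at the boundary index $i=k+2$ when $n=k+2$, where the second window of the shifted relation is empty and must be verified to contribute nothing. Finally, the listed initial values are treated separately: $f_q(x;n,k;\theta)=\delta_{x,0}$ for $0\le n<k$ is immediate, while $f_q(0;k,k;\theta)=1-\theta^{k}$, $f_q(1;k,k;\theta)=\theta^{k}$, and the two $n=k+1$ values follow by directly enumerating the admissible arrangements and weighting each by its $q$-probability exactly as in the proof of Theorem \ref{exactpmftype4q-bino}; for instance $f_q(1;k+1,k;\theta)$ collects $\underbrace{1\cdots1}_{k}0$ with weight $\theta^{k}(1-\theta)$ and $0\underbrace{1\cdots1}_{k}$ with weight $(1-\theta)\theta^{k}q^{k}$, giving $\theta^{k}(1-\theta)(1+q^{k})$.
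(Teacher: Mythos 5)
Your proof is correct and is essentially the paper's own route: the paper offers no explicit proof, saying only that the corollary is ``obtained by applying mathematical algebra'' to Theorem~\ref{recurpmftype4q-bino}, and your difference argument---subtracting $\theta$ times the recursion \eqref{recursive1} at $n-1$ from the recursion at $n$, re-indexing so the windows $\{1,\dots,k\}\cup\{k+2,\dots,n\}$ and $\{2,\dots,k+1\}\cup\{k+3,\dots,n\}$ telescope to the three surviving lags $i=1$, $k+1$, $k+2$---is precisely that algebra. Your separate verification of the boundary values (including $f_{q}(1;k+1,k;\theta)=\theta^{k}(1-\theta)(1+q^{k})$ from the two admissible sequences) and of the cancellation of the $\theta^{n}\delta_{x,0}$ terms under the hypothesis $n>k+1$ is also sound.
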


\subsection[PGF, MGF and moments of $E_{n,k}$]{PGF,MGF and moments of $E_{n,k}$}
In this section we derive the PDF, MGF and moments of $E_{n,k}$ using recursive scheme. We also derive the alternitive expression for the PMF of $E_{n,k}$.
\begin{proposition}
For $0< q\leq 1,$ the PGF $\phi_{q}(t;n,k;\theta)=E_{q,\theta}(t^{E_{n,k}})=\sum_{x\in \mathcal{R}(E_{n,k})}\\t^{x}f_{q}(x;n,k;\theta),$ $t\in R,$ for $n>k,$ is given by
\begin{equation}
\begin{split}
\phi_{q}(t;n,k;\theta)=\theta \phi_{q}(t;n-1,k;\theta)+&(1-\theta)\Big\{\phi_{q}(t;n-1,k;\theta q)\\
&+\theta^{k}(t-1)\phi_{q}(t;n-k-1,k;\theta q)\\
&+\theta^{k+1}(1-t)\phi_{q}(t;n-k-2,k;\theta q)\Big\}.\\
\end{split}
\end{equation}
with $\phi_{q}(x;n,k;\theta)=1$ for $0\leq n <k,$  $\phi_{q}(t;k,k;\theta)=1+\theta^{k}(t-1)$ and $\phi_{q}(t;k+1,k;\theta)=1-\theta^{k}(1-\theta)(1+q^k)+\theta^{k}(1-\theta)(1+q^k)t.$
\end{proposition}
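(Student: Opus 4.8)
The plan is to obtain the stated recursion by passing from the mass-function recurrence of Corollary \ref{recurpmftype4q-bino2} to generating functions, and then to verify the listed initial conditions by direct substitution. First I would take the identity of Corollary \ref{recurpmftype4q-bino2}, multiply both sides by $t^{x}$, and sum over $x\in\mathcal{R}(E_{n,k})$. Since by definition $\sum_{x}t^{x}f_{q}(x;m,k;\alpha)=\phi_{q}(t;m,k;\alpha)$, linearity of the summation turns each \emph{unshifted} term on the right into a PGF with the appropriate parameter, producing $\theta\,\phi_{q}(t;n-1,k;\theta)$ from the first term and $(1-\theta)\phi_{q}(t;n-1,k;\theta q)$ from the second.

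The only nonmechanical step is the treatment of the index-shifted terms $f_{q}(x-1;\cdot)$. Here I would apply the substitution $x=u+1$, which yields
\[
\sum_{x}t^{x}f_{q}(x-1;m,k;\theta q)=t\sum_{u}t^{u}f_{q}(u;m,k;\theta q)=t\,\phi_{q}(t;m,k;\theta q).
\]
This manipulation is legitimate because $f_{q}$ has finite support and vanishes for $x$ outside $\mathcal{R}(E_{n,k})$, so the summation range may be extended to all integers without changing the value and the shift introduces no boundary terms. Applying this identity to the two bracketed differences in Corollary \ref{recurpmftype4q-bino2} pairs each shifted term with its unshifted partner: the $\theta^{k}$ block collapses to $\theta^{k}(t-1)\phi_{q}(t;n-k-1,k;\theta q)$ and the $\theta^{k+1}$ block to $\theta^{k+1}(1-t)\phi_{q}(t;n-k-2,k;\theta q)$, which together reproduce the asserted right-hand side.

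Finally I would establish the initial conditions directly from the definition of the PGF. For $0\leq n<k$ the range of $E_{n,k}$ is the single point $0$ with $f_{q}(x;n,k;\theta)=\delta_{x,0}$, so $\phi_{q}(t;n,k;\theta)=t^{0}=1$. For $n=k$ and $n=k+1$ I would insert the two-atom mass functions recorded in Corollary \ref{recurpmftype4q-bino2}, namely $f_{q}(0;k,k;\theta)=1-\theta^{k}$, $f_{q}(1;k,k;\theta)=\theta^{k}$, and likewise $f_{q}(0;k+1,k;\theta)=1-\theta^{k}(1-\theta)(1+q^{k})$, $f_{q}(1;k+1,k;\theta)=\theta^{k}(1-\theta)(1+q^{k})$, reading off the two linear polynomials $1+\theta^{k}(t-1)$ and $1-\theta^{k}(1-\theta)(1+q^{k})+\theta^{k}(1-\theta)(1+q^{k})t$. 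I do not expect a serious obstacle, since the argument is essentially bookkeeping; the point requiring the most care is the validity range. The mass-function recurrence of Corollary \ref{recurpmftype4q-bino2} holds for $n>k+1$, so the generating-function recursion inherits exactly that range, and the values at $n=k$ and $n=k+1$ are genuine initial conditions rather than instances of the recursion (a direct check shows that substituting $n=k+1$ into the right-hand side does not return the stated $\phi_{q}(t;k+1,k;\theta)$). I would keep track of this so that the shifted arguments $n-k-1$ and $n-k-2$ never drop below the regime in which the PGFs on the right are already defined.
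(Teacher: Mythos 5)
Your proposal is correct and follows essentially the same route as the paper, which likewise obtains the recursion by multiplying the PMF recurrence of Corollary \ref{recurpmftype4q-bino2} by $t^{x}$ and summing over $x$. Your treatment is in fact more careful than the paper's one-line proof: you justify the index shift $\sum_{x}t^{x}f_{q}(x-1;\cdot)=t\,\phi_{q}(t;\cdot)$ via finite support, verify the initial conditions at $n=k$ and $n=k+1$ explicitly, and correctly flag that the recursion inherits the validity range $n>k+1$ from the corollary rather than the stated $n>k$.
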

\begin{proof}
Multiplying both sides of \eqref{recyrsivepmf2} by $t^{x}$ and summing up for all $x$ we obtain the recursive scheme for the PGF of $E_{n,k}$ under model.
\end{proof}

\begin{proposition}
For $0< q\leq 1,$ the MGF $\psi_{q}(t;n,k;\theta)=E_{q,\theta}(e^{tE_{n,k}})=
\sum_{x\in \mathcal{R}(E_{n,k})}e^{tx}\\ f_{q}(x;n,k;\theta)=\phi_{q}(e^{t};n,k;\theta),$ $t\in R,$ for $n>k,$ is given by
\begin{equation}
\begin{split}
\psi_{q}(t;n,k;\theta)=&\theta \psi_{q}(t;n-1,k;\theta)\\
&+(1-\theta)\Big\{\psi_{q}(t;n-1,k;\theta q)\\
&+\theta^{k}(e^{t}-1)\psi_{q}(t;n-k-1,k;\theta q)\\
&+\theta^{k+1}(1-e^{t})\psi_{q}(t;n-k-2,k;\theta q)\Big\}.\\
\end{split}
\end{equation}
with $\psi_{q}(x;n,k;\theta)=1$ for $0\leq n <k,$ $\psi_{q}(t;k,k;\theta)=1+\theta^{k}(e^{t}-1)$ and $\psi_{q}(t;k+1,k;\theta)=1-\theta^{k}(1-\theta)(1+q^k)+\theta^{k}(1-\theta)(1+q^k)e^{t}.$
\end{proposition}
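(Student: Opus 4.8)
The plan is to exploit the functional relation $\psi_{q}(t;n,k;\theta)=\phi_{q}(e^{t};n,k;\theta)$ already recorded in the statement, and to transport the PGF recursion from the preceding proposition through the substitution $t\mapsto e^{t}$. Concretely, writing $z=e^{t}$, the definition of the MGF gives
\[
\psi_{q}(t;n,k;\theta)=\sum_{x\in\mathcal{R}(E_{n,k})}e^{tx}f_{q}(x;n,k;\theta)=\sum_{x\in\mathcal{R}(E_{n,k})}z^{x}f_{q}(x;n,k;\theta)=\phi_{q}(z;n,k;\theta),
\]
so that $\psi_{q}$ is nothing but $\phi_{q}$ with its first (formal) argument evaluated at $z=e^{t}$. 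This identity holds for every admissible triple $(n,k,\theta)$, which is exactly what makes the substitution argument work term by term.

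First I would invoke the recursion for $\phi_{q}(z;n,k;\theta)$ valid for $n>k$ from the previous proposition and substitute $z=e^{t}$ in every term. The variable $z$ enters that recursion only through its plain appearances in the two coefficients $\theta^{k}(z-1)$ and $\theta^{k+1}(1-z)$; the probabilistic weights $\theta$, $1-\theta$, $\theta^{k}$, $\theta^{k+1}$, the shifted first arguments $n-1$, $n-k-1$, $n-k-2$, and the deformed parameter $\theta q$ are all independent of $z$. Hence the substitution turns these two coefficients into $\theta^{k}(e^{t}-1)$ and $\theta^{k+1}(1-e^{t})$ and leaves everything else untouched, producing precisely the claimed recursion for $\psi_{q}$.

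Finally I would treat the initial data in the same way. The boundary values $\phi_{q}(z;n,k;\theta)=1$ for $0\leq n<k$, $\phi_{q}(z;k,k;\theta)=1+\theta^{k}(z-1)$, and $\phi_{q}(z;k+1,k;\theta)=1-\theta^{k}(1-\theta)(1+q^{k})+\theta^{k}(1-\theta)(1+q^{k})z$ become, after setting $z=e^{t}$, the stated values $\psi_{q}(t;n,k;\theta)=1$, then $1+\theta^{k}(e^{t}-1)$, and then $1-\theta^{k}(1-\theta)(1+q^{k})+\theta^{k}(1-\theta)(1+q^{k})e^{t}$, respectively. The hard part, such as it is, is purely bookkeeping: one must apply the relation $\psi_{q}(t;m,k;\alpha)=\phi_{q}(e^{t};m,k;\alpha)$ simultaneously to each term of the recursion, with its own shifted parameters $(n-1,\theta)$, $(n-1,\theta q)$, $(n-k-1,\theta q)$, and $(n-k-2,\theta q)$. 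Since that identity holds as a formal identity in $m$ and $\alpha$, the substitution is legitimate for each term, and no genuine analytic obstacle arises beyond verifying that the two $z$-dependent coefficients are the only places where $t$ must be exponentiated.
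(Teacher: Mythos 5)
Your proposal is correct and matches the paper's intent exactly: the paper states this proposition without a separate proof precisely because, having recorded the identity $\psi_{q}(t;n,k;\theta)=\phi_{q}(e^{t};n,k;\theta)$ in the statement, the recursion and boundary values follow immediately by substituting $z=e^{t}$ into the preceding PGF proposition, which is what you do. Your added care in noting that the substitution must be applied term by term to each shifted parameter triple, and that $z$ appears only in the two coefficients $\theta^{k}(z-1)$ and $\theta^{k+1}(1-z)$, is sound bookkeeping and introduces no gap.
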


\begin{corollary}
\label{fmm}
For $0< q\leq 1,$ let $\rho_{q}(r;n,k;\theta)=E_{q,\theta}\left[(E_{n,k})_{r}\right]=E_{q,\theta}\Big[E_{n,k}(E_{n,k}-1)(E_{n,k}-2)\cdots (E_{n,k}-r+1)\Big]$ and $\nu_{q}(r;n,k;\theta)=E_{q,\theta}(E_{n,k}^{r}),$ $r \geq 1.$ Then for $n>k,$
\begin{equation}
\begin{split}
\rho_{q}(r;n,k;\theta)=&\theta \rho_{q}(r,n-1,k;\theta)+(1-\theta)\Big\{\rho_{q}(r,n-1,k;\theta q)\\
&+\theta^{k}r\rho_{q}(r-1,n-k-1,k;\theta q)-\theta^{k+1}r\rho_{q}(r-1,n-k-2,k;\theta q)\Big\}.\\
\end{split}
\end{equation}
with $\rho_{q}(0;n,k;\theta)=1$ for $n \geq 0$; $\rho_{q}(r,n,k;\theta)=0$ for $0\leq n <k$; $\rho_{q}(r,n,k;\theta)=\theta^{k}\delta_{r,1}$ for $n=k,$; $\rho_{q}(r,n,k;\theta)=\theta^{k}(1-\theta)(1+q^k)\delta_{r,1}$ for $n=k+1,$ and
\noindent
\begin{equation}
\begin{split}
\nu_{q}(r;n,k;\theta)=&\theta \nu_{q}(r,n-1,k;\theta)+(1-\theta)\Bigg\{\nu_{q}(r,n-1,k;\theta q)\\
&+\theta^{k}\sum_{i=0}^{r-1}{r \choose i}\nu_{q}(i,n-k-1,k;\theta q)\\
&-\theta^{k+1}\sum_{i=0}^{r-1}{r \choose i}\nu_{q}(i,n-k-2,k;\theta q)\Bigg\}.\\
\end{split}
\end{equation}
\noindent
with $\nu_{q}(0;n,k;\theta)=1$ for $n \geq 0$; $\nu_{q}(r,n,k;\theta)=0$ for $0\leq n <k$; $\nu_{q}(r,n,k;\theta)=\theta^{k}$ for $n=k,$
$\nu_{q}(r,n,k;\theta)=\theta^{k}(1-\theta)(1+q^k)$ for $n=k+1.$

\end{corollary}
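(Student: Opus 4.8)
The plan is to obtain both recursions by differentiating the generating-function recursions of the preceding two Propositions. The factorial moments are the derivatives of the PGF at $t=1$, namely $\rho_{q}(r;n,k;\theta)=\phi_{q}^{(r)}(1;n,k;\theta)$, since $\phi_{q}^{(r)}(t)=\sum_{x}(x)_{r}\,t^{x-r}f_{q}(x;\cdot)$, while the ordinary moments are the derivatives of the MGF at $t=0$, namely $\nu_{q}(r;n,k;\theta)=\psi_{q}^{(r)}(0;n,k;\theta)$. Thus I would differentiate each recursion $r$ times in $t$ and evaluate at the appropriate point.

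For the factorial moments, differentiating the PGF recursion term by term, the two linear terms $\theta\phi_{q}(t;n-1,k;\theta)$ and $(1-\theta)\phi_{q}(t;n-1,k;\theta q)$ produce $\theta\rho_{q}(r;n-1,k;\theta)$ and $(1-\theta)\rho_{q}(r;n-1,k;\theta q)$ after evaluation at $t=1$. The work is in the two product terms. Applying the Leibniz rule to $(t-1)\phi_{q}(t;n-k-1,k;\theta q)$ gives $\sum_{j=0}^{r}\binom{r}{j}\frac{d^{j}}{dt^{j}}(t-1)\cdot\phi_{q}^{(r-j)}(t;n-k-1,k;\theta q)$; since $(t-1)$ and its derivatives vanish except for $j=0$ (value $t-1$) and $j=1$ (value $1$), and since the $j=0$ term is killed by the factor $t-1$ at $t=1$, only $j=1$ survives, yielding $r\,\rho_{q}(r-1;n-k-1,k;\theta q)$. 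The term $\theta^{k+1}(1-t)\phi_{q}(\cdots)$ is handled identically with opposite sign, and assembling the pieces gives the stated first recursion.

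For the ordinary moments I would differentiate the MGF recursion and evaluate at $t=0$. The linear terms again produce $\theta\nu_{q}(r;n-1,k;\theta)$ and $(1-\theta)\nu_{q}(r;n-1,k;\theta q)$. The difference from the factorial case appears in the product terms: for $(e^{t}-1)\psi_{q}(t;n-k-1,k;\theta q)$ the Leibniz rule gives $\sum_{j=0}^{r}\binom{r}{j}\frac{d^{j}}{dt^{j}}(e^{t}-1)\cdot\psi_{q}^{(r-j)}(t;n-k-1,k;\theta q)$, and now \emph{every} derivative of $e^{t}-1$ equals $e^{t}$, which is $1$ at $t=0$, while the $j=0$ term is again annihilated by $e^{t}-1=0$ at $t=0$. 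Hence all of $j=1,\ldots,r$ contribute, giving $\sum_{j=1}^{r}\binom{r}{j}\nu_{q}(r-j;n-k-1,k;\theta q)$, which after the substitution $i=r-j$ and $\binom{r}{r-i}=\binom{r}{i}$ becomes the binomial sum $\sum_{i=0}^{r-1}\binom{r}{i}\nu_{q}(i;n-k-1,k;\theta q)$ appearing in the statement; the $\theta^{k+1}$ term contributes its negative.

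The boundary conditions I would read directly off the generating-function boundary values. Since $\rho_{q}(0;\cdot)=\nu_{q}(0;\cdot)=\phi_{q}(1;\cdot)=\psi_{q}(0;\cdot)=1$, the $r=0$ cases hold for $n\geq 0$; for $0\leq n<k$ we have $E_{n,k}\equiv 0$, so all moments of order $r\geq 1$ vanish. For $n=k$ and $n=k+1$ the Proposition shows the PGF is affine in $t$, so $E_{n,k}$ is a Bernoulli variable with the indicated success probability; because a Bernoulli variable satisfies $(X)_{r}=0$ for $r\geq 2$ and $X^{r}=X$ for $r\geq 1$, the factorial moments collapse to $\delta_{r,1}$ times the success probability while the ordinary moments equal that probability for every $r\geq 1$, matching the stated values $\theta^{k}$ and $\theta^{k}(1-\theta)(1+q^{k})$. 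The only real obstacle is the careful Leibniz bookkeeping in the product terms, in particular recognizing why the falling-factorial case retains only $j=1$ whereas the power-moment case retains the full binomial sum $j=1,\ldots,r$; everything else is routine substitution.
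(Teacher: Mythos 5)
Your proposal is correct and is essentially the paper's own (implicit) argument: the paper states this as a corollary of the two preceding propositions precisely because the factorial and ordinary moment recursions follow by differentiating the PGF recursion $r$ times at $t=1$ and the MGF recursion $r$ times at $t=0$, with the Leibniz-rule bookkeeping and Bernoulli boundary cases you carried out. Nothing in your derivation deviates from or adds a gap to that route.
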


\begin{remark}
{\rm
Using Corollary \ref{fmm} an alternative expression for the PMF of $E_{n,k}$ is given by
\begin{equation}
f_{q}(x;n,k;\theta)=\frac{1}{x!}\sum_{r\geq x}(-1)^{r-x}\frac{\rho_{q}(r;n,k;\theta)}{(r-x)!}=\sum_{r\geq x}(-1)^{r-x}{r \choose x}\rho_{q}^{\prime}(r;n,k;\theta),
\end{equation}
where $\rho_{q}^{\prime}(r;n,k;\theta)=E_{q,\theta}\left[{E_{n,k} \choose r}\right]=\frac{\rho_{q}(r;n,k;\theta)}{r!}$ is the $r$th binomial moment of $E_{n,k}$. And an alternative expression for the survival(reliability) function of $E_{n,k}$ is given by
\begin{equation}
\sum_{i\geq x}f_{q}(i;n,k;\theta)=\sum_{i\geq x}(-1)^{x+i}{i-1 \choose x-1}\frac{\rho_{q}(i;n,k;\theta)}{i!}.
\end{equation}
}
\end{remark}
Moreover, the $r$th moment about the mean, $\xi_{q}(r;n,k;\theta)= E_{q,\theta}\left\{[E_{n,k}-E_{q,\theta}(E_{n,k})]^{r}\right\}$, is provided by the relation,
\begin{equation}
\begin{split}
\xi_{q}(r;n,k;\theta)=&\sum_{j=0}^{r-2}(-1)^{j}{r \choose j} \nu_{q}(r-j;n,k;\theta)(\nu_{q}(1;n,k;\theta))^{j}\\
&+(-1)^{r-1}(r-1)(\nu_{q}(1;n,k;\theta))^{r},\ r \geq 2,
\end{split}
\end{equation}
with $\xi_{q}(0;n,k;\theta)=1,$ $\xi_{q}(1;n,k;\theta)=0.$ Consequently, we get the shape factors $\gamma_{1}$ (a measure of skewness or asymmetry) and $\gamma_{2}$(a measure of kurtosis or peakedness)
\begin{equation}
\begin{split}
\gamma_{1}(n,k;q,\theta)&=\xi_{q}(3;n,k;\theta)/ \left(\xi_{q}(2;n,k;\theta)\right)^{3/2},\\
\gamma_{2}(n,k;q,\theta)&=\xi_{q}(4;n,k;\theta)/ \left(\xi_{q}(2;n,k;\theta)\right)^{2}
\end{split}
\end{equation}
\noindent
of a $q-B_{k}^{\rom{4}}(n,\theta)$ distribution.

\subsection{Closed formulae for the mean and variance of $E_{n,k}$}
In this section we obtain the closed formulae for the mean and variance of $E_{n,k}$. Alternative recurrent relations for the means and the variance of $E_{n,k}$ under model are obtained as $E_{q,\theta}(E_{n,k})=\rho_{q}(1;n,k;\theta)=\nu_{q}(1;n,k;\theta)$, $Var_{q,\theta}(E_{n,k})=\xi_{q}(2;n,k;\theta)=\nu_{q}(2;n,k;\theta)-(\nu_{q}(1;n,k;\theta))^{2}=\rho_{q}(2;n,k;\theta)+\rho_{q}(1;n,k;\theta)(1-\rho_{q}(l;n,k;\theta))$.

\begin{lemma}
\label{indicatotft}
For the indicators $I_{j}=(1-X_{j-1})(1-X_{j+k})\prod_{m=j}^{j+k-1}X_{m},$ $j=1,2,\ldots,n-k+1,$ defined on a $q$-sequence of
$0-1$ RVs $\{X_{j}\}_{j=1}^{n}$ obeying model, let $\mu_{j}=E_{q,\theta}(I_{j})$ and $\mu_{i,j}=E_{q,\theta}(I_{i}I_{j}).$ Then \noindent
\begin{equation*}
\begin{split}
\mu_{j}=&\theta^{k}\sum_{i=2}^{j}q^{(i-1)k}\left(1-\theta q^{i-2}\right)\left(1-\theta
q^{i-1}\right)b_{q}(j-i;j-2;\theta),\ \text{for}\ j=1,2,\ldots,n-k+1,\\
\end{split}
\end{equation*}
and for $i=1,2,\ldots,n-2k-1,$ $j=i+1,i+2,\ldots,n-k$, $\mu_{i,j}=0$ if $j-i\leq k$
and
\begin{equation*}
\mu_{i,j}=\begin{cases}
             \theta^{2k}\sum\limits_{\alpha=2}^{i}\sum\limits_{\beta=\alpha+2}^{\alpha+j-i-k}q^{(\alpha+\beta-2)k}\left(1-\theta q^{\alpha-2}\right)\left(1-\theta
q^{\alpha-1}\right)\\
b_{q}(i-\alpha;i-2;\theta)
\left(1-\theta q^{\beta-2}\right)\left(1-\theta
q^{\beta-1}\right)^{I(j<i+k+1)}\\
b_{q}(j-i-k-\beta+\alpha;j-2-i-k;\theta q^{\alpha}),\\
\multicolumn{1}{@{}r@{\quad}}{ \mbox{for $i\geq 2$ and $j>i+k+1$}} \\
\theta^{2k}\sum\limits_{\beta=3}^{j-k}q^{(\beta-1)k}(1-\theta)\left(1-\theta q^{\beta-2}\right)\left(1-\theta
q^{\beta-1}\right)^{I(j<i+k+1)}\\
b_{q}(j-\beta-k;j-2-i-k;\theta q),\\
\multicolumn{1}{@{}r@{\quad}}{ \hbox{for $i=1$ and $j>i+k+1$}} \\
\theta^{2k}\sum\limits_{\alpha=2}^{i}q^{(2\alpha-1)k}\left(1-\theta q^{\alpha-2}\right)\left(1-\theta
q^{\alpha-1}\right)b_{q}(i-\alpha;i-2;\theta),\\
\multicolumn{1}{@{}r@{\quad}}{ \hbox{for $i\geq 2$ and $j=n-k+1$}} \\
\theta^{2k}q^{k}\left(1-\theta \right)\left(1-\theta
q\right),\\
\multicolumn{1}{@{}r@{\quad}}{ \hbox{for $i=1$ and $j=n-k+1$}} \\
\end{cases}
\end{equation*}

%
%
%

\end{lemma}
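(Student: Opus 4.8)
The plan is to read each indicator combinatorially: $I_j=1$ exactly when positions $j-1,\dots,j+k$ carry the local pattern $0\,\underbrace{1\cdots1}_{k}\,0$, i.e.\ a success run of length precisely $k$ begins at $j$ (with the usual convention that the absent trials $X_{0}$ and $X_{n+1}$ act as boundary failures). The single moment $\mu_j=E_{q,\theta}(I_j)$ is then the probability of a fully prescribed word on the trials up to $j+k$, so the only surviving randomness is in the prefix $X_1,\dots,X_{j-2}$. Throughout I read the three-argument symbol $b_q(x;m;\theta)$ as the $q$-binomial PMF of \eqref{q-binomialdist}, namely $P_{q,\theta}\{Z_m=x\}$, the probability of $x$ successes (equivalently $m-x$ failures) in $m$ trials of the model; this is the factor I attach to every ``free'' block.

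First I would record the one computation the whole argument rests on. By \eqref{failureprobofq2} the conditional success/failure probabilities at a trial depend on the history only through the accumulated failure count, so the model restarts after any prefix: if $c$ failures have occurred before a block, the block $0\,1^{k}\,0$ has probability
$$(1-\theta q^{c})\,(\theta q^{c+1})^{k}\,(1-\theta q^{c+1})=\theta^{k}q^{(c+1)k}(1-\theta q^{c})(1-\theta q^{c+1}),$$
and a fresh stretch of trials following $c$ failures behaves like an independent $q$-model with leading parameter $\theta q^{c}$. For $\mu_j$ I would then condition on the number $c=i-2$ of failures among the first $j-2$ trials, weight the displayed block probability by $P(S^{(0)}_{j-2}=i-2)=b_q(j-i;j-2;\theta)$, and sum $i$ from $2$ to $j$; the $q$-exponent $(i-1)k$ and the two failure factors emerge exactly as in the stated formula.

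For the joint moment $\mu_{i,j}$ ($i<j$) I would first note that two runs of length exactly $k$ cannot coexist when $j-i\le k$, since the forced failure at $i+k$ would fall inside the second run (or the runs overlap); hence $\mu_{i,j}=0$ there. For $j-i\ge k+1$ the trials split into five consecutive blocks: a free prefix, the first pattern $0\,1^{k}\,0$, a free middle stretch, the second pattern $0\,1^{k}\,0$, and a free suffix. I would introduce $\alpha$ so that $\alpha-2$ failures sit in the prefix and $\beta$ so that $\beta-2$ failures have accumulated by the start of the second run, apply the restart property block by block, and read off: the prefix contributes $b_q(i-\alpha;i-2;\theta)$; the first pattern contributes $\theta^{k}q^{(\alpha-1)k}(1-\theta q^{\alpha-2})(1-\theta q^{\alpha-1})$; the middle, being a fresh model after $\alpha$ failures, contributes $b_q(j-i-k-\beta+\alpha;\,j-2-i-k;\,\theta q^{\alpha})$; and the second pattern contributes $\theta^{k}q^{(\beta-1)k}(1-\theta q^{\beta-2})(1-\theta q^{\beta-1})$. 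Multiplying these, summing $\alpha$ over $2,\dots,i$ and $\beta$ over $\alpha+2,\dots,\alpha+j-i-k$, and collecting the exponents into $q^{(\alpha+\beta-2)k}$ reproduces the leading case.

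The routine part is the algebra of exponents; the delicate part---and the source of the piecewise statement---is the boundary and adjacency bookkeeping. When $i=1$ there is no trial $X_0$, so the first pattern loses its leading failure factor and the prefix sum collapses at $\alpha=1$, replacing $(1-\theta q^{\alpha-2})$ by $1$ and leaving the lone factor $1-\theta$; when $j=n-k+1$ the second run abuts the end, so its trailing failure factor $(1-\theta q^{\beta-1})$ must be dropped; and when $j=i+k+1$ the failure after the first run and the failure before the second run coincide, forcing $\beta=\alpha+1$, emptying the middle block and merging the two failure factors. Each of these removes or fuses exactly one factor, which is what distinguishes the displayed cases. I expect the main obstacle to be tracking the accumulated failure count cleanly through these merges, so that every shift $\theta q^{(\cdot)}$ and every power of $q$ is assigned to the correct block; the $\mu_j$ derivation and the interior $\mu_{i,j}$ case are straightforward once the restart property is in hand, and the $q\to1$ specialisations provide a useful consistency check.
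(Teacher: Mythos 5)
Your proposal is correct and follows essentially the same route as the paper's proof: conditioning on the accumulated number of failures before each $0\,1^{k}\,0$ block (the indices $i$, $\alpha$, $\beta$), using the $q$-binomial PMF $b_{q}(\cdot;\cdot;\cdot)$ for the free stretches, and exploiting the restart property to give the middle block parameter $\theta q^{\alpha}$. Your explicit treatment of the boundary/adjacency cases ($i=1$, $j=n-k+1$, $j=i+k+1$) is in fact more detailed than the paper, which dismisses them with ``the other cases are derived in a similar way.''
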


\begin{proof}
 Appling the total probability law, for $j=2,\ldots,n-k,$ we have

\begin{equation*}
\begin{split} \mu_{j}=&P_{q,\theta}(I_{j}=1)=\sum_{i=2}^{j}P_{q,\theta}\left(I_{j}=1,\
S_{j+k}^{(1)}=j+k-i\right)\\ =&\sum_{i=2}^{j}P_{q,\theta}\left(X_{j-1}=0,\ X_{j}=\cdots=X_{j+k-1}=1,\ X_{j+k}=0,\ S_{j-2}^{(1)}=j-i\right)\\
=&\sum_{i=2}^{j}\left(1-\theta q^{i-2}\right)\left(\theta q^{i-1}\right)^{k}\left(1-\theta
q^{i-1}\right)P_{q,\theta}\left(S_{j-2}^{(1)}=j-i\right)\\ =&\sum_{i=2}^{j}\left(1-\theta q^{i-2}\right)\left(\theta
q^{i-1}\right)^{k}\left(1-\theta q^{i-1}\right)\left[\begin{array}{c}
         j-2\\
        j-i \\
        \end{array} \right]_{q}\theta^{j-i}{\prod_{{m}=1}^{i-2}}(1-\theta q^{m-1}).\\
\end{split}
\end{equation*}

In a similar way we get \begin{equation*}
\begin{split}
\mu_{1}=&P_{q,\theta}(I_{1}=1)=P_{q,\theta}\left(I_{1}=1,\
S_{k+1}^{(1)}=k\right)\\ =&P_{q,\theta}\left(X_{1}=\cdots=X_{k}=1,\ X_{k+1}=0\right)=\theta^{k}\left(1-\theta \right),\\ \end{split}
\end{equation*}
and
\begin{equation*}
\begin{split}
\mu_{n-k+1}=&P_{q,\theta}(I_{n-k+1}=1)=\sum_{i=1}^{n-k}P_{q,\theta}\left(I_{n-k+1}=1,\ S_{n}^{(1)}=n-i\right)\\
=&\sum_{i=1}^{n-k}P_{q,\theta}\left(X_{n-k}=0,\ X_{n-k+1}=\cdots=X_{n}=1,\ S_{n-k+1}^{(1)}=n-i-k\right)\\ =&\sum_{i=1}^{n-k}\left(1-\theta
q^{i-1}\right)\left(\theta q^{i}\right)^{k}\left(1-\theta q^{i-1}\right)P_{q,\theta}\left(S_{n-k+1}^{(1)}=n-i-k\right)\\
=&\theta^{k}\sum_{i=1}^{n-k}\left(1-\theta q^{i-1}\right)q^{ik}\left[\begin{array}{c}
         n-k-1\\
        n-k-i \\
        \end{array} \right]_{q}\theta^{n-k-i}{\prod_{{m}=1}^{i-1}}\left(1-\theta q^{m-1}\right).\\
\end{split}
\end{equation*}

Next, we observe that for $j-i\leq k,$ $\mu_{i,j}=0$ and for $i=2,\ldots,n-2k-1,$ $j=i+k+1,\ldots,n-k,$ we have

\begin{equation*}
\begin{split} \mu_{i,j}=&\sum_{\alpha=2}^{i}\sum_{\beta=\alpha+2}^{\alpha+j-i-k}P_{q,\theta}\left(I_{i}=1,\
I_{j}=1,\ S_{i+k}^{(1)}=i+k-\alpha,\ S_{j+k}^{(1)}=j+k-\beta\right)\\
=&\sum_{\alpha=2}^{i}\sum_{\beta=\alpha+2}^{\alpha+j-i-k}P_{q,\theta}\Big(X_{i-1}=0,\ X_{i}=\cdots=X_{i+k-1}=1,\ X_{i+k}=0,\
S_{i-2}^{(1)}=i-\alpha,\\ &X_{j-1}=0,\ X_{j}=\cdots=X_{j+k-1}=1,\ X_{j+k}=0,\ S_{j-2}^{(1)}=j-\beta\Big)\\
=&\sum_{\alpha=2}^{i}\sum_{\beta=\alpha+2}^{\alpha+j-i-k}\left(1-\theta q^{\alpha-2}\right)\left(\theta q^{\alpha-1}\right)^{k}\left(1-\theta q^{\alpha-1}\right)b_{q}(i-\alpha;i-2;\theta)\big(1-\theta q^{\beta-2}\big)\\
&\big(\theta q^{\beta-1}\big)^{k}\big(1-\theta q^{\beta-1}\big)b_{q}(j-i-k-\beta;j-2-i-k;\theta q^{\alpha}).
\end{split}
\end{equation*}
The other cases are derived in a similar way.

\end{proof}

\begin{remark}
{\rm
In the particular case, for $q\rightarrow 1$, $\begin{bmatrix}
                                                 n \\
                                                 x
                                               \end{bmatrix}_{q}$
converges to $\binom{n}{x}$, then we have $\mu_{1}\rightarrow \theta^k(1-\theta),$ $\mu_{n-k+1}\rightarrow \theta^k(1-\theta),$ $\mu_{j}\rightarrow \theta^k(1-\theta)^2,$ for $j=2,\ldots,n-k,$ $\mu_{i,j}\rightarrow 0$ for $j-i\leq k$, $\mu_{1,k+2}=\mu_{n-2k,n-k+1}\rightarrow \theta^{2k}(1-\theta)^2$, $\mu_{i,i+k+1}\rightarrow \theta^{2k}(1-\theta)^3$ for $i=2,3,\ldots,n-2k-1,$ $\mu_{1,j}\rightarrow \theta^{2k}(1-\theta)^3$ for $j=k+3,\ldots,n-k,$ $\mu_{i,n-k+1}\rightarrow \theta^{2k}(1-\theta)^3$ for $i=2,\ldots,n-2k-1,$ and $\mu_{i,j}\rightarrow \theta^{2k}(1-\theta)^4$ for $j-i\geq k+2.$
}
\end{remark}

\begin{theorem}
Let $mu_{j}$ and $\mu_{i,j}$ be as in Lemma \ref{indicatotft}. Then, for $n\geq k$ the mean, $E_{q,\theta}(E_{n,k})$, and the variance, $Var_{q,\theta}(E_{n,k})$, are given by
  \begin{equation*}
    E_{q,\theta}(E_{n,k})=\sum_{j=1}^{n-k+1}\mu_{j},
  \end{equation*}
and

  \begin{equation*}
    \begin{split}
    Var_{q,\theta}(E_{n,k})=&\sum_{i=1}^{n-k+1}\Bigg\{\mu_{i}\Bigg(1-\sum_{j=1}^{n-k+1}I_{j}\Bigg)\\
    &+I(n-2k-1)\Bigg(\sum_{j=1}^{i-k+1}\mu_{j,i}+\sum_{j=i+k+1}^{n-k+1}\mu_{i,j}\Bigg)\Bigg\}.
    \end{split}
  \end{equation*}

\end{theorem}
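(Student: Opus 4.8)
The plan is to represent $E_{n,k}$ as a sum of the indicator variables $I_j$ introduced in Lemma \ref{indicatotft}, and then apply the elementary identities for the expectation and the variance of a sum of (dependent) Bernoulli indicators. First I would establish the fundamental identity
\[
E_{n,k} = \sum_{j=1}^{n-k+1} I_{j}.
\]
The indicator $I_{j}=(1-X_{j-1})(1-X_{j+k})\prod_{m=j}^{j+k-1}X_{m}$ equals $1$ precisely when positions $j,\ldots,j+k-1$ form a success run of length exactly $k$ delimited by a failure (or a boundary) on each side, under the convention that the edge factors $(1-X_{0})$ at $j=1$ and $(1-X_{n+1})$ at $j=n-k+1$ are set to $1$; these are exactly the truncated forms of $\mu_{1}$ and $\mu_{n-k+1}$ recorded in Lemma \ref{indicatotft}. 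Since every Type~$\rom{4}$ run of length exactly $k$ has a unique starting index $j\in\{1,\ldots,n-k+1\}$ and distinct runs have distinct starting indices, the assignment run $\mapsto$ starting index is a bijection onto $\{\,j : I_{j}=1\,\}$, which gives the identity. Taking expectations and using linearity together with $\mu_{j}=E_{q,\theta}(I_{j})$ yields $E_{q,\theta}(E_{n,k})=\sum_{j=1}^{n-k+1}\mu_{j}$ at once.

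For the variance I would expand the variance of the sum,
\[
Var_{q,\theta}(E_{n,k}) = \sum_{i=1}^{n-k+1} Var_{q,\theta}(I_{i}) + \sum_{i\neq j} Cov_{q,\theta}(I_{i},I_{j}).
\]
Because each $I_{j}$ is $0$--$1$ valued we have $I_{j}^{2}=I_{j}$, hence $E_{q,\theta}(I_{j}^{2})=\mu_{j}$ and $Var_{q,\theta}(I_{j})=\mu_{j}-\mu_{j}^{2}$, and likewise $Cov_{q,\theta}(I_{i},I_{j})=\mu_{i,j}-\mu_{i}\mu_{j}$ for $i\neq j$. Substituting and regrouping the products through $\sum_{i}\mu_{i}^{2}+\sum_{i\neq j}\mu_{i}\mu_{j}=\bigl(\sum_{i}\mu_{i}\bigr)^{2}$ collapses the squared and cross terms into a single square, giving
\[
Var_{q,\theta}(E_{n,k}) = \sum_{i}\mu_{i}\Bigl(1-\sum_{j}\mu_{j}\Bigr) + \sum_{i\neq j}\mu_{i,j},
\]
which is the displayed formula (the $I_{j}$ appearing inside the braces of the statement being read as its expectation $\mu_{j}$). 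It then remains to rewrite the residual double sum $\sum_{i\neq j}\mu_{i,j}=\sum_{i}\bigl(\sum_{j<i}\mu_{j,i}+\sum_{j>i}\mu_{i,j}\bigr)$ and to invoke the vanishing $\mu_{i,j}=0$ whenever $|i-j|\leq k$ from Lemma \ref{indicatotft}: the lower part is supported on $j\leq i-k-1$, so extending its upper limit to $i-k+1$ only adds null terms, and the upper part is supported on $j\geq i+k+1$; this reproduces exactly the inner limits $\sum_{j=1}^{i-k+1}\mu_{j,i}$ and $\sum_{j=i+k+1}^{n-k+1}\mu_{i,j}$ in the statement. Since these covariance contributions can be nonzero only when two disjoint exact-$k$ runs can coexist, namely when $n\geq 2k+1$, the indicator factor $I(\cdot)$ in the theorem simply switches them off for smaller $n$.

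The representation identity and the expectation are routine once the edge conventions are pinned down. The genuinely delicate step is the variance bookkeeping: one must verify that the $-\mu_{i}\mu_{j}$ pieces from the covariances combine with the diagonal $-\mu_{i}^{2}$ precisely into $-\bigl(\sum_{i}\mu_{i}\bigr)^{2}$ inside the braces, and that restricting the ranges via $\mu_{i,j}=0$ for $|i-j|\leq k$ reproduces the stated summation limits without double counting and with correct treatment of the boundary indices $i=1$ and $i=n-k+1$, whose indicators obey the special truncated forms of Lemma \ref{indicatotft}. I expect this reindexing and boundary accounting to be the main obstacle, whereas the probabilistic content is entirely carried by the indicator decomposition.
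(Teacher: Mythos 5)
Your proof is correct and takes essentially the same route as the paper: both rest on the indicator representation $E_{n,k}=\sum_{j=1}^{n-k+1}I_{j}$ and the joint moments $\mu_{i,j}$ of Lemma \ref{indicatotft}, with the vanishing $\mu_{i,j}=0$ for $|i-j|\leq k$ (plus the $n\geq 2k+1$ threshold) fixing the summation limits, and with the $I_{j}$ inside the braces of the statement read, as you do, as a typo for $\mu_{j}$. The only cosmetic difference is that the paper obtains the second moment $E_{q,\theta}(E_{n,k}^{2})=\sum_{i}\{\mu_{i}+\sum_{j=1}^{i-k+1}\mu_{j,i}+\sum_{j=i+k+1}^{n-k+1}\mu_{i,j}\}$ by conditioning on $I_{i}=1$ and then subtracts $(\sum_{j}\mu_{j})^{2}$, whereas you expand $\sum_{i}Var_{q,\theta}(I_{i})+\sum_{i\neq j}Cov_{q,\theta}(I_{i},I_{j})$ and regroup; these are the same computation rearranged.
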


\begin{proof}
We start with the study of $E_{q,\theta}(E_{n,k})$. Since the linearity of expectations we have
  \begin{equation*}
    E_{q,\theta}(E_{n,k})=E_{q,\theta}\Bigg(\sum_{j=1}^{n-k+1}I_{j}\Bigg)=\sum_{j=1}^{n-k+1} E_{q,\theta}(I_{j})=\sum_{j=1}^{n-k+1}\mu_{j}.
  \end{equation*}
We are mow going to condition on $I_{i}=1$ we have that for $i=1,2,\ldots,n-k+1,$
\begin{equation*}
\begin{split}
 E_{q,\theta}(E_{n,k}\mid I_{i}=1)=&E_{q,\theta}\Bigg(\sum_{j=1}^{n-k+1}I_{j}\mid I_{i}=1\Bigg)=\sum_{j=1}^{n-k+1} E_{q,\theta}(I_{j}\mid I_{i}=1)\\
 =&1+\sum_{j=1}^{i-k+1}\frac{\mu_{j,i}}{\mu_{i}}+\sum_{j=i+k+1}^{n-k+1}\frac{\mu_{i,j}}{\mu_{i}},\ n\geq 2k+1,
\end{split}
  \end{equation*}

and $E_{q,\theta}(E_{n,k}\mid I_{i}=1)=1$ for $k\leq n\leq 2k$. Since $E_{n,k}$ can be expressed as the sum of the indicators $I_i$, $E_{n,k}^2$ written as follows.

\begin{equation*}
  \begin{split}
E_{n,k}^2=&I_1( I_1 + I_2 + I_3 + \cdots+ I_{n-k+1} ) +\\
          &I_2( I_1 + I_2 + I_3 + \cdots+ I_{n-k+1} ) +\\
          &I_3( I_1 + I_2 + I_3 + \cdots+ I_{n-k+1} ) +\\
          &\cdots\\
          &I_{n-k+1}( I_1 + I_2 + I_3 + \cdots+ I_{n-k+1} ).
  \end{split}
\end{equation*}
The expected value of $I_1( I_1 + I_2 + I_3 + \cdots+ I_{n-k+1})$ conditional on $I_i = 1$ is the same as the expected value of $I_1 + I_2 + I_3 + \cdots+ I_{n-k+1}$ conditional on $I_i = 1$, which is the expected value of $E_{n,k}$ under the same condition. If we condition on $I_i = 0$, the expected value
is just equal to 0. Now we use that the expected value is the sum
$P_{q,\theta}(Ii = 1)E_{q,\theta}(E_{n,k} | I_i = 1) + P_{q,\theta}(I_i = 0)E_{q,\theta}(E_{n,k} | I_i = 0)$ and you add
up over all $i = 1,2,...,n-k+1$, we have

  \begin{equation*}
    \begin{split}
    E_{q,\theta}(E_{n,k}^{2})=&\sum_{i=1}^{n-k+1}P_{q,\theta}(I_i = 1)E_{q,\theta}(E_{n,k} | I_i = 1)\\
=&\sum_{i=1}^{n-k+1}\Bigg\{\mu_{i}+\sum_{j=1}^{i-k+1}\mu_{j,i}+\sum_{j=i+k+1}^{n-k+1}\mu_{i,j}\Bigg\},\ n\geq 2k+1,
    \end{split}
  \end{equation*}

and $E_{q,\theta}(E_{n,k}^{2})=\sum_{i=1}^{n-k+1}P_{q,\theta}(I_i = 1)=\sum_{i=1}^{n-k+1}\mu_{i}$, for $k\leq n \leq 2k$.

\end{proof}

\begin{remark}
{\rm
In the particular case, for $q\rightarrow 1$, $\begin{bmatrix}
                                                 n \\
                                                 x
                                               \end{bmatrix}_{q}$
converges to $\binom{n}{x}$, then we have $E_{q,\theta}(E_{n,k})\rightarrow E_{\theta}(E_{n,k})=(1-\theta)\theta^2 \{2+(n-k-1)(1-\theta)\},$ for $k=1,\ldots,n-1$ and $E_{q,\theta}(E_{n,k})\rightarrow E_{\theta}(E_{n,k})=\theta^n$, for $k=n$.
}
\end{remark}

\section{Simulation Study} \label{sec: simul}
The Maximum Likelihood Estimation (MLE) is one of the most commonly used estimation methods because it has good properties: invariance, consistency, asymptotic efficiency, and asymptotic normality (see \cite{CaseBerg:01} for the details). In a sequence of $n$-binary (0-1) trials with varying probability of ones, one can define a random variable and a parametric model on runs of ones. \cite{makri2016runs} illustrated the likelihood inference from Type \RNum{2} $q$-binomial distribution of order $k$ model. Likewise, we conduct a simulation study to explain the likelihood inference from Type \RNum{4} $q$-binomial distribution of order $k$ model.

We assume that the true value of $q$ is known and $\theta$ is parameter of interest. We generate $N$ independent $q$-binomial sequences of length $n$, when the true value of $\theta$ is $\theta_0$. For each sequence, compute $E_{n,k}^{(i)}; i=1, \dots, N$, the number of non-overlap runs of length $k$. Let $\boldsymbol{E}_N=(E_{n,k}^{(1)}, \dots, E_{n,k}^{(N)})^T$. Then, $l(\theta; \boldsymbol{E}_N)$, the log-likelihood function of $\theta$ given the simulated data $\boldsymbol{E}_N$, is as follows:
\begin{equation} \label{eq:log_like_fun}
    l(\theta;~ \boldsymbol{E}_N)=\sum_{i=1}^N \log(f(E_{n,k}^{(i)};~ \theta)), \text{ for }\theta \in [0,~ 1].
\end{equation}
$\hat{\theta}$, the MLE of $\theta$ is as follows:
\begin{equation} \label{eq:mle}
    \hat{\theta}=\argmax_{\theta \in [0,~1]} l(\theta;~ \boldsymbol{E}_N).
\end{equation}
To solve the optimization in \eqref{eq:mle}, we use the Brent's derivative-free algorithm \citep{brent2002algorithms}.

For the interval estimation, we employ the likelihood ratio (LR) confidence interval. $I_{1-\alpha}$, $(1-\alpha)100\%$ LR confidence interval is as follows:
\begin{equation} \label{eq:lr_int}
    I_{1-\alpha}=\{\theta \in [0,~1]:~2l(\hat{\theta};~ \boldsymbol{E}_N)-2l(\theta;~ \boldsymbol{E}_N) \le \chi^2_{1,\alpha}\},
\end{equation}
where $\chi^2_{1, \alpha}$ is $1-\alpha$ quantile of $\chi^2$ distribution of degree of freedom of one, i.e.,\ $P(X \le \chi^2_{1,\alpha})=1-\alpha$ for a random variable $X$ from  $\chi^2$ distribution of degree of freedom of one and $\alpha \in (0,~1)$. Under certain regularity conditions, the set $I_{1-\alpha}$ is a closed interval, i.e.,\ $I_{1-\alpha}=[L_\alpha, U_\alpha]$, where $L_\alpha$, and $U_\alpha$ are solutions to the equation
\begin{equation} \label{eq:LU_int}
    2l(\hat{\theta};~ \boldsymbol{E}_N)-2l(\theta;~ \boldsymbol{E}_N) =\chi^2_{1,\alpha}.
\end{equation}
We apply Monte Carlo (MC) simulation of size $M$, in which, the same data generation and statistical inference processes are repeated for $M$ times independently. For each of $M$ MC samples, we generate $\boldsymbol{E}_{N,m}$ and compute $\hat{\theta}_m$ and $I_{\alpha, m}$ for $m=1, \dots, M$. From those values, we compute the following measures.
\begin{enumerate}
    \item Bias:
    \begin{equation}\label{eq:bias}
            \bar{\hat{\theta}}-\theta_0,
    \end{equation}
    where $\bar{\hat{\theta}}=\frac{\sum_{m=1}^M\hat{\theta}_m}{M}$

    \item Standard Error (SE):
    \begin{equation}\label{eq:SE}
        \sqrt{{\sum_{m=1}^M \Big(\hat{\theta}_m-\bar{\hat{\theta}}\Big)^2}/{M}}.
    \end{equation}
    \item Root Mean Squared Errors (RMSE):
    \begin{equation}\label{eq:RMSE}
        \sqrt{{\sum_{m=1}^M (\hat{\theta}_m-\theta_0)^2}/{M}}.
    \end{equation}

    \item Coverage Probability (CP):
    \begin{equation}\label{eq:CP}
            \frac{\sum_{m=1}^M, I(\theta_0 \in I_{\alpha, m})}{M}.
    \end{equation}
    \item Mean width of the confidence interval of $\theta$ (MW):
    \begin{equation}\label{eq:width}
        \frac{\sum_{m=1}^M(\max(I(\theta_0 \in I_{\alpha, m}))-\min(I(\theta_0 \in I_{\alpha, m})))}{M}
    \end{equation}

\end{enumerate}

All possible combinations of the following true parameters are investigated in this simulation study: $q \in \{0.6, ~0.8\}$, $n \in \{11, ~15, ~20, ~25\}$, $\theta \in \{0.05,~0.1,~0.15, \dots,~0.95\}$, $k \in \{3,~5\}$, and $N \in \{100,~1000\}$.

\begin{figure}[htbp]
\centering
\subfloat[$q=0.6$, $k=3$, and $N=100$]{
  \includegraphics[width=65mm]{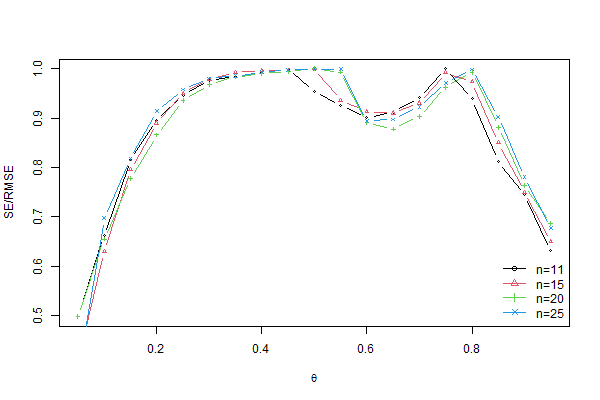}
}
\subfloat[$q=0.6$, $k=3$, and $N=1000$]{
  \includegraphics[width=65mm]{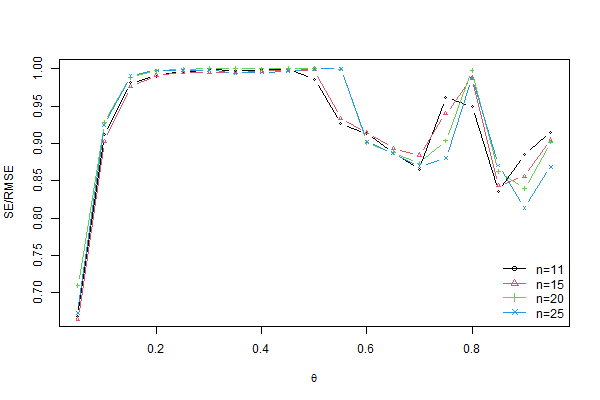}
}
\hspace{0mm}
\subfloat[$q=0.6$, $k=5$, and $N=100$]{
  \includegraphics[width=65mm]{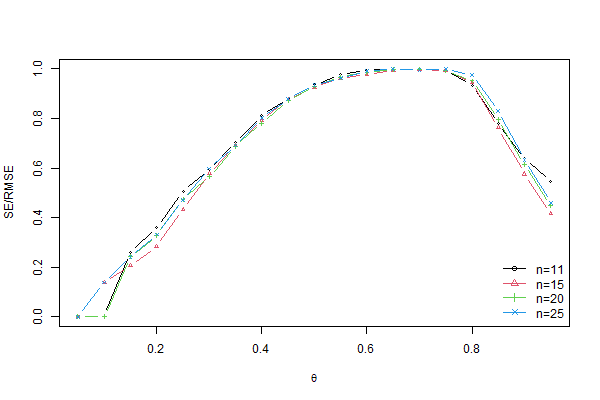}
}
\subfloat[$q=0.6$, $k=5$, and $N=1000$]{
  \includegraphics[width=65mm]{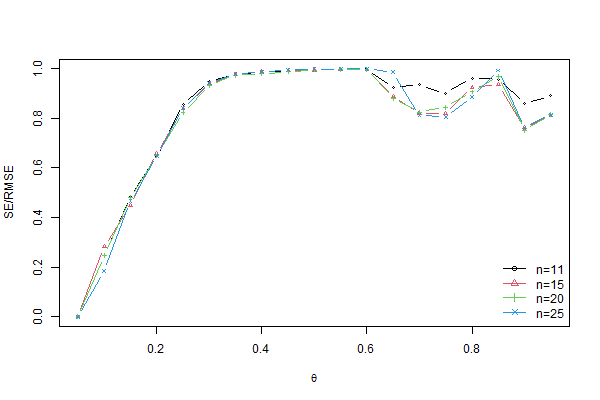}
}
\hspace{0mm}
\subfloat[$q=0.8$, $k=3$, and $N=100$]{   
  \includegraphics[width=65mm]{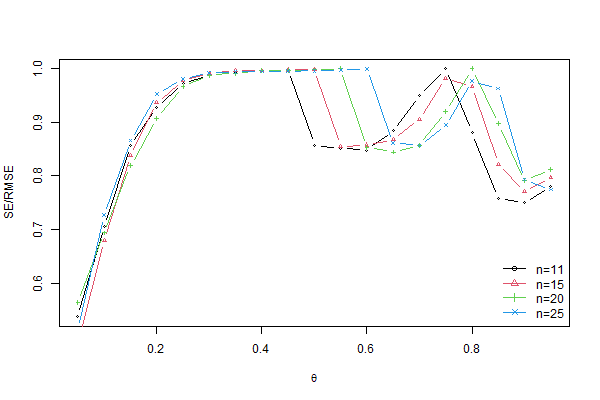}
}
\subfloat[$q=0.8$, $k=3$, and $N=1000$]{
  \includegraphics[width=65mm]{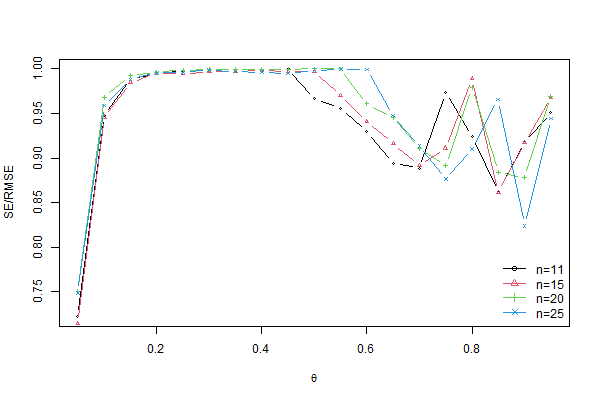}
}
\hspace{0mm}
\subfloat[$q=0.8$, $k=5$, and $N=100$]{   
  \includegraphics[width=65mm]{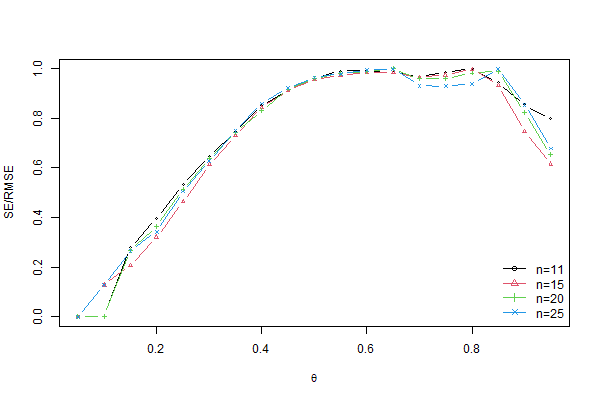}
}
\subfloat[$q=0.8$, $k=5$, and $N=1000$]{
  \includegraphics[width=65mm]{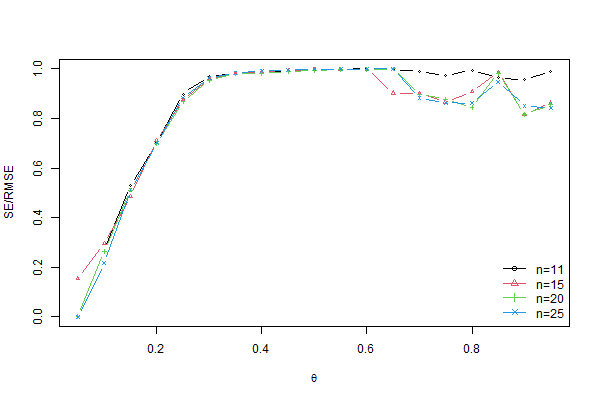}
}
\caption{The plots of $SE/RMSE$ vs $\theta$.} \label{fig:SE/RMSE}
\end{figure}

Figure \ref{fig:SE/RMSE} presents the plots of $SE/RMSE$ vs $\theta$. As illustrated in the graphs, when $\theta$ is close to 0 or 1, the $SE/RMSE$ value is far from 1, implying a greater bias in the MLE. However, when the sample size is raised from 100 to 1000, the $SE/RMSE$ value approaches unity for extreme values of $\theta$. Additionally, it is worth noting that there are cases where the value of $SE/RMSE$ is far from 1, despite the fact that $\theta$ is moderate.

\begin{figure}[htbp]
\centering
\subfloat[$q=0.6$, $k=3$, and $N=100$]{
  \includegraphics[width=65mm]{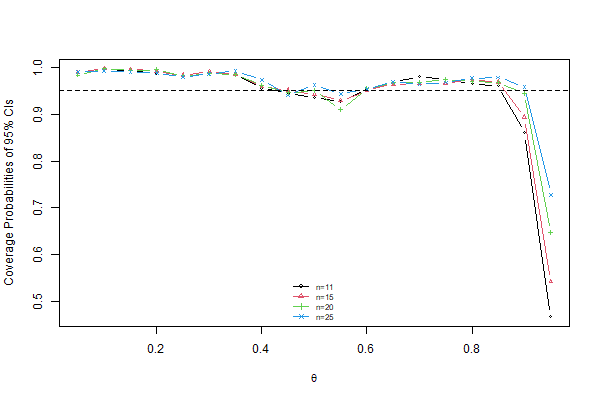}
}
\subfloat[$q=0.6$, $k=3$, and $N=1000$]{
  \includegraphics[width=65mm]{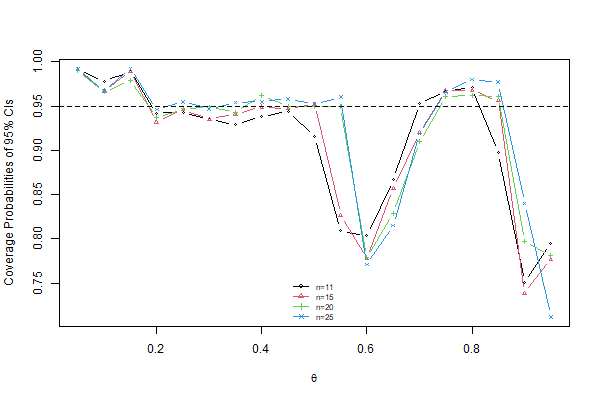}
}
\hspace{0mm}
\subfloat[$q=0.6$, $k=5$, and $N=100$]{
  \includegraphics[width=65mm]{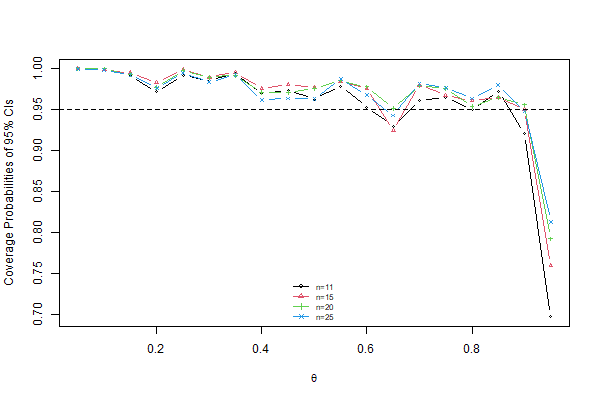}
}
\subfloat[$q=0.6$, $k=5$, and $N=1000$]{
  \includegraphics[width=65mm]{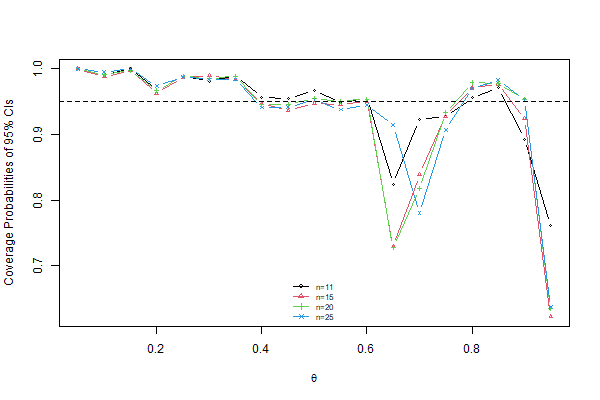}
}
\hspace{0mm}
\subfloat[$q=0.8$, $k=3$, and $N=100$]{   
  \includegraphics[width=65mm]{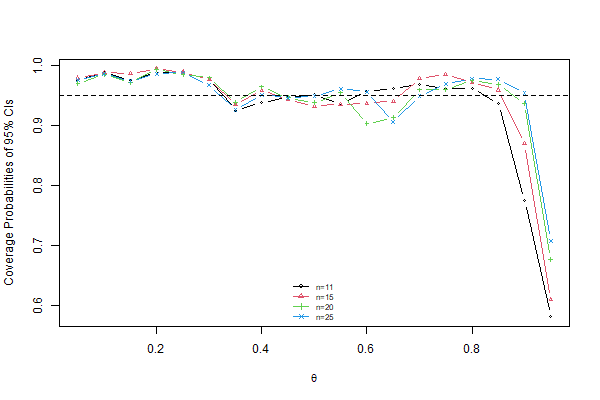}
}
\subfloat[$q=0.8$, $k=3$, and $N=1000$]{
  \includegraphics[width=65mm]{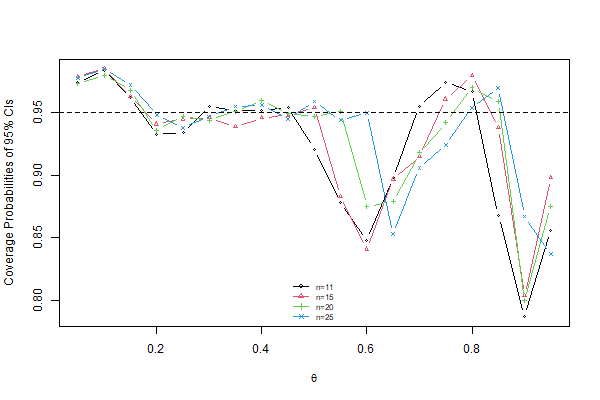}
}
\hspace{0mm}
\subfloat[$q=0.8$, $k=5$, and $N=100$]{   
  \includegraphics[width=65mm]{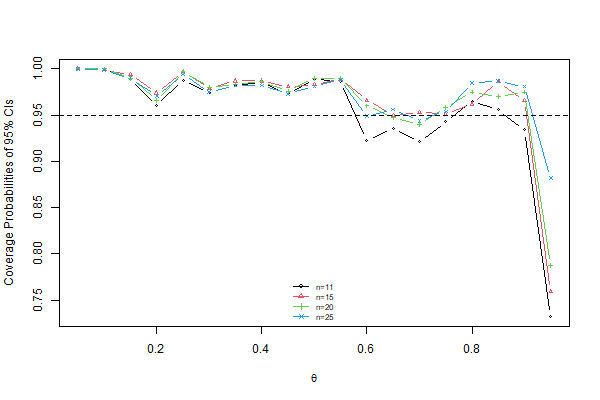}
}
\subfloat[$q=0.8$, $k=5$, and $N=1000$]{
  \includegraphics[width=65mm]{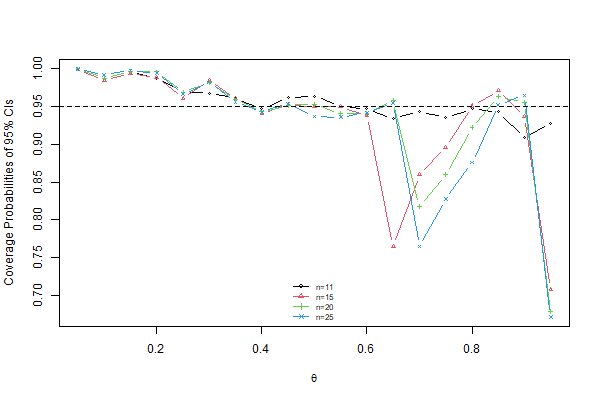}
}
\caption{The plots of the coverage probabilities of 95\% CIs vs $\theta$.} \label{fig:CP}
\end{figure}

The figures in Figure \ref{fig:CP} show the coverage probabilities of 95 percent confidence intervals versus $\theta$. Consistent with the results in Figure \ref{fig:SE/RMSE}, when $\theta$ is close to 0 or 1, or even moderate, the coverage probabilities depart from the nominal 0.95 coverage probability. Taken together, these findings imply that the MLE of $\theta$ can be inconsistent in the type $4$ $q$-binomial model.

\begin{figure}[htbp]
\centering
\subfloat{
  \includegraphics[width=65mm]{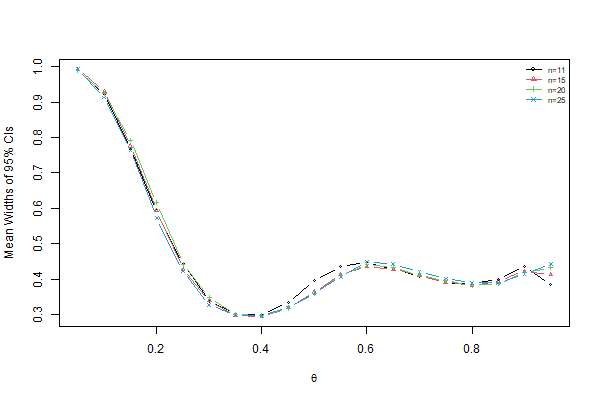}
}
\subfloat{
  \includegraphics[width=65mm]{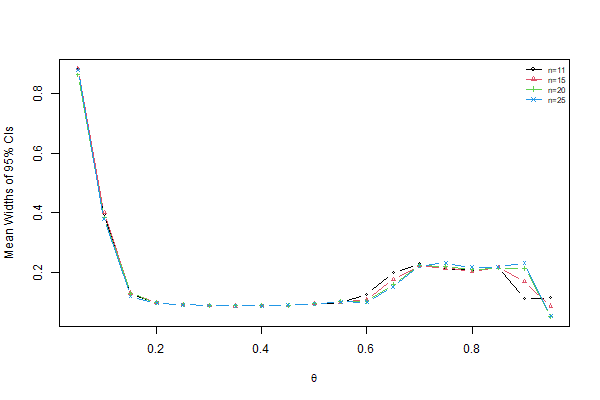}
}
\hspace{0mm}
\subfloat{
  \includegraphics[width=65mm]{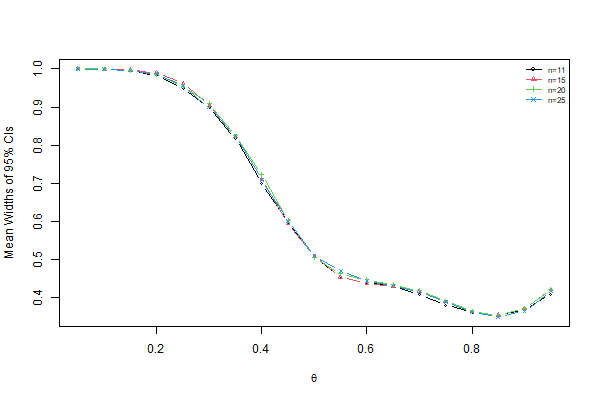}
}
\subfloat{
  \includegraphics[width=65mm]{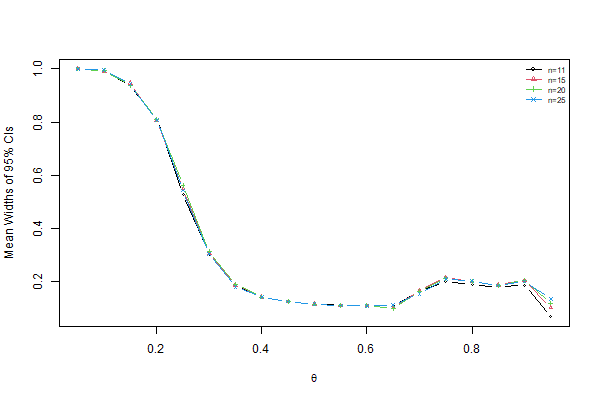}
}
\hspace{0mm}
\subfloat{   
  \includegraphics[width=65mm]{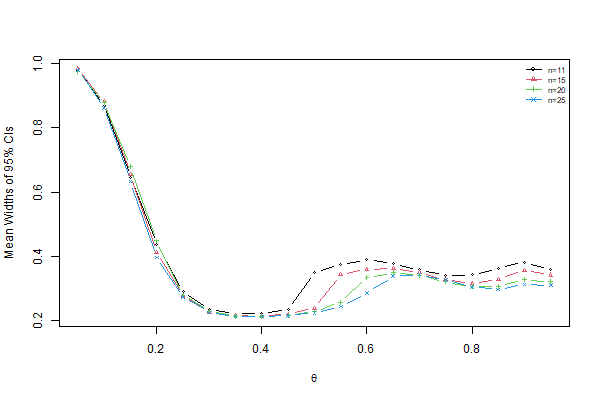}
}
\subfloat{
  \includegraphics[width=65mm]{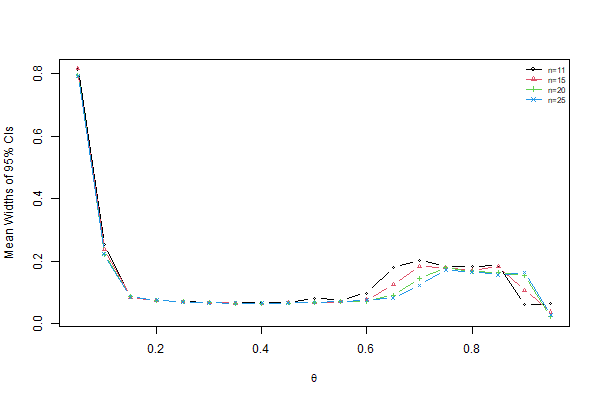}
}
\hspace{0mm}
\subfloat{   
  \includegraphics[width=65mm]{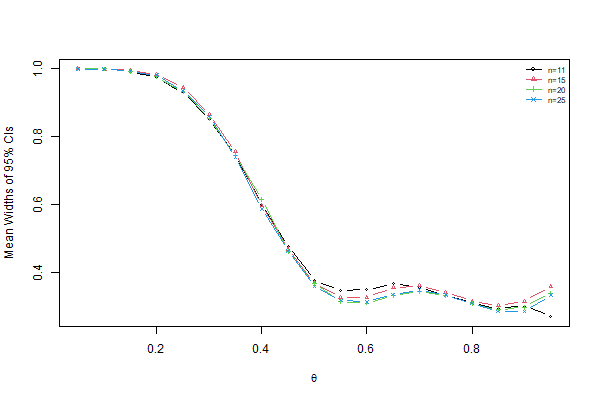}
}
\subfloat{
  \includegraphics[width=65mm]{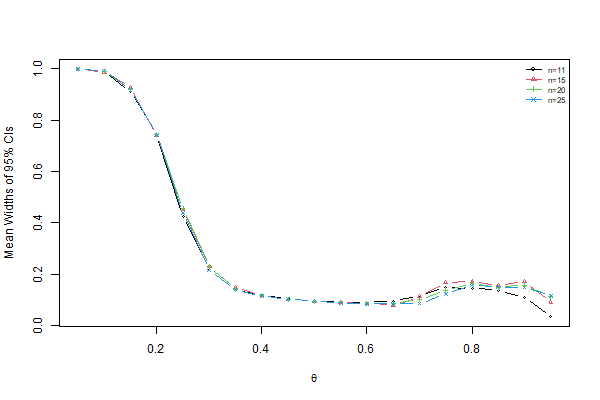}
}
\caption{The plots of mean widths of 95\% CIs vs $\theta$.} \label{fig:MW}
\end{figure}

The plots in Figure 3 depict the mean widths of 95 percent confidence intervals vs $\theta$. There is an evident trend that the mean width of the CI is wider for tiny values of $\theta$.

\bibliographystyle{agsm}
\bibliography{biblio-combine.bib}

\end{document}